\newcommand{\bb}{\mathrm{\bf b}}
\newcommand{\bff}{\mathrm{\bf f}}
\newcommand{\bx}{\mathrm{\bf x}}
\newcommand{\bA}{\mathrm{\bf A}}
\newcommand{\ba}{\mathrm{\bf a}}
\newcommand{\bu}{\mathrm{\bf u}}
\newcommand{\bv}{\mathrm{\bf v}}
\newcommand{\bm}{\mathrm{\bf m}}
\newcommand{\bB}{\mathrm{\bf B}}
\newcommand{\bP}{\mathrm{\bf P}}
\newcommand{\bcc}{\mathrm{\bf c}}
\newcommand{\bZ}{\mathrm{\bf Z}}
\newcommand{\bg}{\mathrm{\bf g}}
\newcommand{\bF}{\mathrm{\bf F}}
\newcommand{\bG}{\mathrm{\bf G}}
\newcommand{\bz}{\mathrm{\bf z}}
\newcommand{\bH}{\mathrm{\bf H}}
\newcommand{\bI}{\mathrm{\bf I}}
\newcommand{\bM}{\mathrm{\bf M}}
\newcommand{\bV}{\mathrm{\bf V}}
\newcommand{\bU}{\mathrm{\bf U}}
\newcommand{\bX}{\mathrm{\bf X}}
\newcommand{\bphi}{\mbox{\boldmath $\phi$}}
\newcommand{\bvarphi}{\mbox{\boldmath $\varphi$}}
\newcommand{\bpsi}{\mbox{\boldmath $\psi$}}
\newcommand{\bPhi}{\mbox{\boldmath $\Phi$}}
\newcommand{\bzero}{\mathrm{\bf 0}}
\newcommand{\bxi}{\mbox{\boldmath $\xi$}}
\newcommand{\bLam}{\mbox{\boldmath $\Lambda$}}
\newcommand{\bgamma}{\mbox{\boldmath $\gamma$}}
\newcommand{\bSigma}{\mbox{\boldmath $\Sigma$}}
\newcommand{\bGamma}{\mbox{\boldmath $\Gamma$}}
\newcommand{\bOmega}{\mbox{\boldmath $\Omega$}}
\newcommand{\hB}{\widehat \bB}
\newcommand{\hX}{\widehat \bX}
\newcommand{\hb}{\widehat \bb}
\newcommand{\hm}{\widehat \bm}
\newcommand{\hF}{\widehat \bF}
\newcommand{\hf}{\widehat \bff}
\newcommand{\hSig}{\widehat\Sig}
\newcommand{\hbSig}{\widehat\bSigma}
\newcommand{\hlam}{\widehat\lambda}
\newcommand{\hLam}{\widehat \bLam}
\newcommand{\hxi}{\widehat\bxi}
\newcommand{\hphi}{\widehat\bphi}
\newcommand{\hpsi}{\widehat\bpsi}
\newcommand{\cov}{\mathrm{cov}}
\newcommand{\Sig}{\mathbf{\Sigma}}
\newcommand{\diag}{\mathrm{diag}}
\newcommand{\var}{\mathrm{var}}
\newcommand{\beq}{\begin{eqnarray*}}
\newcommand{\eeq}{\end{eqnarray*}}
\newcommand{\R}{\mathbb{R}}
\DeclareMathOperator*{\argmax}{arg\,max}
\numberwithin{equation}{section}
\theoremstyle{plain}
\newtheorem{thm}{Theorem}[section]
\newtheorem{lem}{Lemma}[section]
\newtheorem{cor}{Corollary}[section]
\newtheorem{prop}{Proposition}[section]
\newtheorem{assum}{Assumption}[section]
\theoremstyle{definition}
\newtheorem{remark}{Remark}[section]
\def\ben{\begin{equation*}}
\def\een{\end{equation*}}
\def\bea{\begin{eqnarray}}
\def\eea{\end{eqnarray}}
\def\bean{\begin{eqnarray*}}
\def\eean{\end{eqnarray*}}
\def\bep{\begin{prop}}
\def\eep{\end{prop}}
\def\bc{\begin{center}}
\def\ec{\end{center}}
\numberwithin{equation}{section}
\begin{document}

\title{Sufficient Forecasting Using Factor Models}

\author{Jianqing Fan$^\dag$, Lingzhou Xue$^\ddag$ and Jiawei Yao$^\dag$\\ $^\dag$Princeton University and $^\ddag$Pennsylvania State University}
\date{}
\maketitle{}

\pagestyle{plain}

\begin{abstract}
We consider forecasting a single time series when there is a large number of predictors and a possible nonlinear effect. The dimensionality was first reduced via a high-dimensional (approximate) factor model implemented by the principal component analysis. Using the extracted factors, we develop a novel forecasting method called the sufficient forecasting, which provides a set of sufficient predictive indices, inferred from high-dimensional predictors, to deliver additional predictive power. The projected principal component analysis will be employed to enhance the accuracy of inferred factors when a semi-parametric (approximate) factor model is assumed. Our method is also applicable to cross-sectional sufficient regression using extracted factors. The connection between the sufficient forecasting and the deep learning architecture is explicitly stated. The sufficient forecasting correctly estimates projection indices of the underlying factors even in the presence of a nonparametric forecasting function. The proposed method extends the sufficient dimension reduction to high-dimensional regimes by condensing the cross-sectional information through factor models. We derive asymptotic properties for the estimate of the central subspace spanned by these projection directions as well as the estimates of the sufficient predictive indices. We further show that the natural method of running multiple regression of target on estimated factors yields a linear estimate that actually falls into this central subspace. Our method and theory allow the number of predictors to be larger than the number of observations. We finally demonstrate that the sufficient forecasting improves upon the linear forecasting in both simulation studies and an empirical study of forecasting macroeconomic variables.
\end{abstract}

\noindent {\textbf{Key Words}:} Regression; forecasting; deep learning; factor model; semi-parametric factor model; principal components; learning indices; sliced inverse regression; dimension reduction.

\onehalfspacing

\section{Introduction}
Regression and forecasting in a data-rich environment have been an important research topic in statistics, economics and finance. Typical examples include forecasts of a macroeconomic output using a large number of employment and production variables \citep{stock-watson-1989,bernanke-etal-2005}, forecasts of the values of market prices and dividends using cross-sectional asset returns \citep{sharpe-1964,lintner-1965}, and studies of association between clinical outcomes and high throughput genomics and genetic information such as microarray gene expressions. The predominant framework to harness the vast predictive information is via the \textit{factor model}, which proves effective in simultaneously modeling the commonality and cross-sectional dependence of the observed data. It is natural to think that the latent factors drive simultaneously the vast covariate information as well as the outcome.  This achieves the dimensionality reduction in the regression and predictive models.

Turning the curse of dimensionality into blessing, the latent factors can be accurately extracted from the vast predictive variables and hence they can be reliably used to build models for response variables. For the same reason, factor models have been widely employed in many applications, such as portfolio management \citep{fama-1992, carhart-1997,fama-2014,hou2015}, large-scale multiple testing \citep{leek-storey-2008,fan-2012}, high-dimensional covariance matrix estimation \citep{fan-etal-2008, fan-2013}, and forecasting using many predictors \citep{stock-watson-2002,stock-watson-2002b}. Leveraging on the recent developments on the factor models, in this paper, we are particularly interested in the problem of forecasting. Our techniques can also be applied to regression problems, resulting in sufficient regression.

With little knowledge of the relationship between the forecast target and the latent factors, most research focuses on a linear model and its refinement. 
Motivated by the classic principal component regression \citep{kendall-1957,hotelling-1957}, \cite{stock-watson-2002,stock-watson-2002b} employed a similar idea to forecast a single time series from a large number of predictors: first use the principal component analysis (PCA) to estimate the underlying common factors, followed by a linear regression of the target on the estimated factors. The key insight here is to condense information from many cross-sectional predictors into several predictive indices. \cite{boivin-ng-2006} pointed out that the relevance of factors is important to the forecasting power, and may lead to improved forecast. As an improvement to this procedure, \cite{bair-etal-2006} used correlation screening to remove irrelevant predictors before performing the PCA. In a similar fashion, \cite{bai-2008} applied boosting and employed thresholding rules to select ``targeted predictors'', and \cite{stock-watson-2012} used shrinkage methods to downweight unrelated principal components. \cite{kelly-pruitt-2014} took into account the covariance with the forecast target in linear forecasting, and they proposed a three-pass regression filter that generalizes partial least squares to forecast a single time series.

As mentioned, most of these approaches are fundamentally limited to linear forecasting. This yields only one index of extracted factors for forecasting.  It does not provide sufficient predictive power when the predicting function is nonlinear and depends on multiple indices of extracted factors. Yet, nonlinear models usually outperform linear models in time series analysis \citep{Tjostheim-1994}, especially in multi-step prediction. 
When the link function between the target and the factors is arbitrary and unknown, a thorough exploration of the factor space often leads to additional gains. To the best of our knowledge, there are only a few papers in forecasting a nonlinear time series using factor models. \cite{bai-2008} discussed the use of squared factors (i.e., volatility of the factors) in augmenting forecasting equation. \cite{Ludvigson-Ng-2007} found that the square of the first factor estimated from a set of financial factors is significant in the regression model for mean excess returns. This naturally leads to the question of which effective factor, or more precisely, which effective direction of factor space to include for higher moments. \cite{cunha-etal-2010} used a class of nonlinear factor models with shape constraints to study cognitive and noncognitive skill formation. The nonparametric forecasting function would pose a significant challenge in estimating effective predictive indices.

In this work, we shall address this issue from a completely different perspective to existing forecasting methods. We introduce a favorable alternative method called the \emph{sufficient forecasting}. Our proposed forecasting method springs from the idea of the sliced inverse regression, which was first introduced in the seminal work by \cite{li-1991}. In the presence of some  arbitrary and unknown (possibly nonlinear) forecasting function, we are interested in estimating a set of \textit{sufficient predictive indices}, given which the forecast target is independent of unobserved common factors. To put it another way, the forecast target relates to the unobserved common factors only through these sufficient predictive indices. Such a goal is closely related to the estimation of the \emph{central subspace} in the dimension reduction literature \citep{cook-2009}. In a linear forecasting model, such a central subspace consists of only one dimension. In contrast, when a nonlinear forecasting function is present, the central subspace can go beyond one dimension, and our proposed method can effectively reveal multiple sufficient predictive indices to enhance the prediction power. This procedure therefore greatly enlarges the scope of forecasting using factor models. As demonstrated in numerical studies, the sufficient forecasting has improved performance over benchmark methods, especially under a nonlinear forecasting equation.

In summary, the contribution of this work is at least twofold. On the one hand, our proposed sufficient forecasting advances existing forecasting methods, and fills the important gap between incorporating target information and dealing with nonlinear forecasting. Our work identifies effective factors that influence the forecast target without knowing the nonlinear dependence.
On the other hand, we present a promising dimension reduction technique through factor models. It is well-known that existing dimension reduction methods are limited to either a fixed dimension or a diverging dimension that is smaller than the sample size \citep{zhu-etal-2006}. With the aid of factor models, our work alleviates what plagues sufficient dimension reduction in high-dimensional regimes by condensing the cross-sectional
information, whose dimension can be much higher than the sample size.

The rest of this paper is organized as follows. Section 2 first proposes the sufficient forecasting using factor models, and then extends the proposed method by using semi-parametric factor models. In Section 3, we establish the asymptotic properties for the sufficient forecasting, and we also prove that the simple linear estimate actually falls into the central subspace. Section 4 demonstrates the numerical performance of the sufficient forecasting in simulation studies and an empirical application. Section 5 includes a few concluding remarks. Technical proofs are rendered in the Appendix.

\section{Sufficient forecasting}

This section first presents a unified framework for forecasting using factor models. We then propose the sufficient forecasting procedure with unobserved factors, which estimates predictive indices without requiring estimating the unknown forecasting function.

\subsection{Factor models and forecasting}
Consider the following factor model with a target variable $y_{t+1}$ which we wish to forecast:
\bea
y_{t+1}&=&h(\bphi_1'\bff_t,\cdots, \bphi_L'\bff_t,\epsilon_{t+1}),  \label{eq2.1}\\
x_{it}&=&\bb_i'\bff_t+u_{it},\quad 1\leq i\leq p,\ 1\leq t\leq T, \label{eq2.2}
\eea
where $x_{it}$ is the $i$-th predictor observed at time $t$, $\bb_i$ is a $K\times 1$ vector of factor loadings, $\bff_t=(f_{1t},\cdots, f_{Kt})'$ is a $K\times 1$ vector of common factors driving both the predictors and the response, and $u_{it}$ is the error term, or the idiosyncratic component. For ease of notation, we write $\bx_t=(x_{1t},\cdots,x_{pt})', \bB=(\bb_1,\cdots,\bb_p)'$ and $\bu_t=(u_{1t},\cdots,u_{pt})'$. In \eqref{eq2.1}, $h(\cdot)$ is an unknown link function, and $\epsilon_{t+1}$ is some stochastic error independent of $\bff_t$ and $u_{it}$. The vectors of linear combinations $\bphi_1,\cdots,\bphi_L$ are $K$-dimensional orthonormal vectors. Clearly, the model is also applicable to the cross-sectional regression such as those mentioned at the introduction of the paper. {Note that the sufficient forecasting can be represented in a deep learning architecture \citep{bengio-2009,bengio-etal-2013}, consisting of four layers of linear/nonlinear processes for dimension reduction and forecasting. The connection between sufficient forecasting and deep learning is illustrated in Figure 1. An advantage of our deep learning model is that it admits scalable and explicit computational algorithm.}

\begin{figure}[!htbp]
\begin{center}
\includegraphics[scale=0.4]{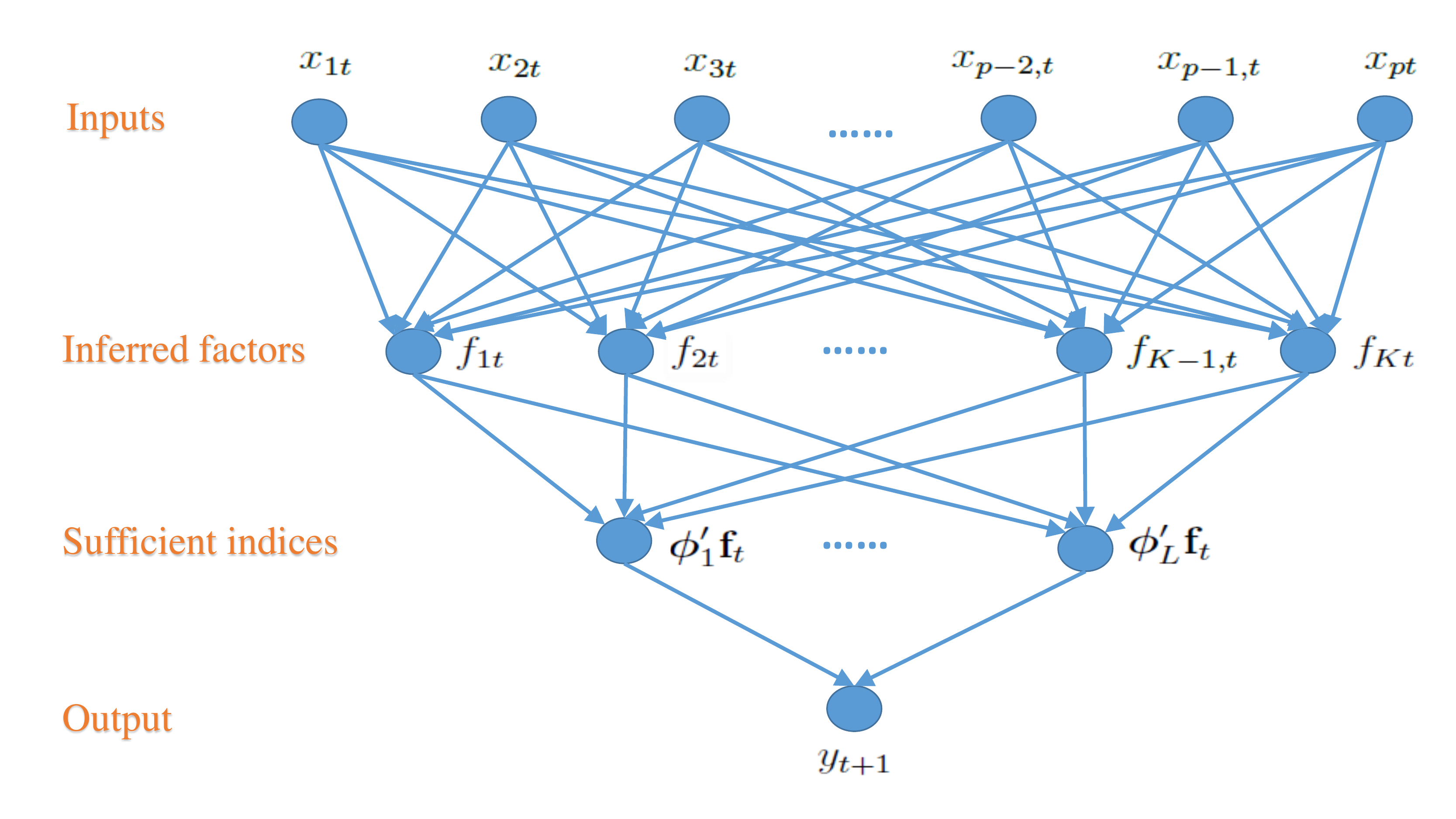}
\caption{\small The sufficient forecasting is represented as a four-layer deep learning architecture. The factors $f_{1t}, \cdots, f_{Kt}$ are obtained from the inputs via principal component analysis.}
\end{center}
\end{figure}

Model \eqref{eq2.1} is a semiparametric multi-index model with latent covariates and unknown nonparametric function $h(\cdot)$. The target $y_{t+1}$ depends on the factors $\bff_t$ only through $L$ predictive indices $\bphi_1'\bff_t,\cdots, \bphi_L'\bff_t$, where $L$ is no greater than $K$. In other words, these predictive indices are \textit{sufficient} in forecasting $y_{t+1}$.  Models \eqref{eq2.1} and \eqref{eq2.2} effectively reduce the dimension from the diverging $p$ to a fixed $L$ to estimate the nonparametric function $h(\cdot)$, and greatly alleviates the curse of dimensionality. Here, $\bphi_i$'s are also called the sufficient dimension reduction (SDR) directions, and their linear span forms the \textit{central subspace} \citep{cook-2009} denoted by $S_{y|\bff}$. Note that the individual directions $\bphi_1,\cdots,\bphi_L$ are not identifiable without imposing any structural condition on $h(\cdot)$. However, the subspace $S_{y|\bff}$ spanned by $\bphi_1,\cdots,\bphi_L$ can be identified. Therefore, throughout this paper, we refer any orthonormal basis $\bpsi_1,\cdots,\bpsi_L$ of the central subspace $S_{y|\bff}$ as {sufficient dimension reduction directions}, and their corresponding predictive indices $\bpsi_1'\bff_t,\cdots, \bpsi_L'\bff_t$ as {sufficient predictive indices}.

Suppose that the factor model (\ref{eq2.2}) has the following canonical normalization
\begin{equation}
\cov(\bff_t)=\bI_K \text{ and } \bB'\bB \text{ is diagonal}, \label{eq2.4}
\end{equation}
where $\bI_K$ is a $K\times K$ identity matrix. This normalization serves as an identification condition of $\bB$ and $\bff_t$ because $\bB\bff_t=(\bB\bOmega)(\bOmega^{-1}\bff_t)$ holds for any nonsingular matrix $\bOmega$. We assume for simplicity that the means of $x_{it}$'s and $\bff_t$'s in (\ref{eq2.2}) have already been removed, namely, $E(x_{it})=0$ and $E(\bff_t)=0$.

As a special case, suppose we have \textit{a priori} knowledge that the link function $h(\cdot)$ in (\ref{eq2.1}) is in fact linear.  In this case, no matter what $L$ is, there is only a single index of the factors $\bff_t$ in the prediction equation, i.e. $L=1$. Now, \eqref{eq2.1} reduces to
$$y_{t+1}=\bphi_1'\bff_t+\epsilon_{t+1}.$$
Such linear forecasting problems using many predictors have been studied extensively in the econometrics literature, for example, \cite{stock-watson-2002,stock-watson-2002b}, \cite{bai-2008}, and \cite{kelly-pruitt-2014}, among others. Existing methods are mainly based on the principal component regression (PCR), which is a multiple linear regression of the forecast target on the estimated principal components. The linear forecasting framework does not thoroughly explore the predictive power embedded in the underlying common factors.  It can only create a single index of the unobserved factors to forecast the target. 
The presence of a nonlinear forecasting function with multiple forecasting indices would only deteriorate the situation. While extensive efforts have been made such as \cite{boivin-ng-2006}, \cite{bai-2009}, and \cite{stock-watson-2012}, these approaches are more tailored to handle issues of using too many factors, and they are fundamentally limited to the linear forecasting.

\subsection{Sufficient forecasting}

Traditional analysis of factor models typically focuses on the covariance with the forecast target $\cov(\bx_t,y_{t+1})$ and the covariance within the predictors $\bx_t$, denoted by a $p\times p$ matrix
\begin{equation}
  \bSigma_x=\bB\cov(\bff_t)\bB'+\bSigma_u, \label{eq2.3}
\end{equation}
where $\bSigma_u$ is the error covariance matrix of $\bu_t$. Usually $\cov(\bx_t,y_{t+1})$ and $\bSigma_x$ are not enough to construct an optimal forecast, especially in the presence of a nonlinear forecasting function. To fully utilize the target information, our method considers the covariance matrix of the inverse regression curve, $E(\bx_t|y_{t+1})$. 
By conditioning on the target $y_{t+1}$ in model \eqref{eq2.2}, we obtain 
\begin{equation*}
\cov(E(\bx_t|y_{t+1}))=\bB\cov(E(\bff_t|y_{t+1}))\bB',
\end{equation*}
where we used the assumption that $E(\bu_t | y_{t+1}) = 0$.
This salient feature does not impose any structure on the covariance matrix of $\bu_t$. Under model (\ref{eq2.1}), \cite{li-1991} showed that $E(\bff_t|y_{t+1})$ is contained in the central subspace $S_{y|\bff}$ spanned by $\bphi_1, \ldots,\bphi_L$ provided that $E(\bb'\bff_t|\bphi_1'\bff_t,\cdots,\bphi_L'\bff_t)$ is a linear function of $\bphi_1'\bff_t,\cdots,\bphi_L'\bff_t$ for any $\bb\in\R^K$. This important result implies that $S_{y|\bff}$ contains the linear span of $\cov(E(\bff_t|y_{t+1}))$. Thus, it is promising to estimate sufficient directions by investigating the top $L$ eigenvectors of $\cov(E(\bff_t|y_{t+1}))$. To see this, let $\bPhi = (\bphi_1, \cdots, \bphi_L)$.  Then, we can write
\begin{equation} \label{eq2.5a}
   E(\bff_t|y_{t+1}) = \bPhi \ba(y_{t+1}),
\end{equation}
for some $L\times 1$ coefficient function $\ba(\cdot)$, according to the aforementioned result by \cite{li-1991}.  As a result,
\begin{equation} \label{eq2.5b}
\cov(E(\bff_t|y_{t+1})) =  \bPhi E [\ba(y_{t+1}) \ba(y_{t+1})^T] \bPhi'.
\end{equation}
This matrix has $L$ nonvanishing eigenvalues if $E [\ba(y_{t+1}) \ba(y_{t+1})^T]$ is non-degenerate.  Their corresponding eigenvectors have the same linear span as
$\bphi_1, \cdots, \bphi_L$.

It is difficult to directly estimate the covariance of the inverse regression curve $E(\bff_t|y_{t+1})$ and obtain sufficient predictive indices. Instead of using the conventional nonparametric regression method, we follow  \cite{li-1991} to explore the conditional information of these underlying factors. To this end, we introduce the sliced covariance estimate
\begin{equation}\label{eq2.5}
\bSigma_{f|y}=\frac{1}{H}\sum_{h=1}^HE(\bff_t|y_{t+1}\in I_h)E(\bff_t'|y_{t+1}\in I_h),
\end{equation}
where the range of $y_{t+1}$ is divided into $H$ slices $I_1,\ldots,I_H$ such that $P(y_{t+1}\in I_h) = 1/H$.  By (\ref{eq2.5a}), we have
\begin{equation} \label{eq2.5c}
\bSigma_{f|y}= \bPhi \left [ \frac{1}{H}\sum_{h=1}^HE(\ba(y_{t+1})|y_{t+1}\in I_h)
E(\ba(y_{t+1})|y_{t+1}\in I_h)^T \right ] \bPhi',
\end{equation}
This matrix has $L$ nonvanishing eigenvalues as long as the matrix in the bracket is non-degenerate.   In this case,
the linear span created by the eigenvectors of the $L$ largest eigenvalues of $\bSigma_{f|y}$
 is the same as that spanned by $\bphi_1, \cdots, \bphi_L$.  Note that $H\ge \max\{L,2\}$ is required in order for the matrix in the bracket to be non-degenerate.


To estimate $\bSigma_{f|y}$ in \eqref{eq2.5} with unobserved factors, a natural solution is to first consistently estimate factors $\bff_t$ from the factor model \eqref{eq2.2} and then use the estimated factors $\hf_t$ and the observed target $y_{t+1}$ to approximate the sliced estimate $\bSigma_{f|y}$. Alternatively, we can start with the observed predictors $\bx_t$. Since $E(\bu_t|y_{t+1}) = 0$, we then have
$$
E(\bx_t|y_{t+1}\in I_h) = \bB E(\bff_t|y_{t+1}\in I_h),
$$
and
$$
 E(\bff_t|y_{t+1}\in I_h) = \bLam_b E(\bx_t|y_{t+1}\in I_h),
$$
where $\bLam_b=(\bB'\bB)^{-1}\bB'$.
Hence, 
\begin{equation}\label{eq2.6}
\bSigma_{f|y}=\frac{1}{H}\sum_{h=1}^H \bLam_b E(\bx_t|y_{t+1}\in I_h)E(\bx_t'|y_{t+1}\in I_h) \bLam_b',
\end{equation}
Notice that $\bLam_b$ is also \textit{unknown} and needs to be estimated. With a consistent estimator $\hLam_b$, we can use $\bx_t$ and $y_{t+1}$ to approximate the sliced estimate $\bSigma_{f|y}$.

In the sequel, we present two estimators, $\hSig_{f|y}^1$ in \eqref{eq2.9} and $\hSig_{f|y}^2$ in \eqref{eq2.10}, estimated by using respectively \eqref{eq2.5} and \eqref{eq2.6}. Interestingly, we show that these two estimators are exactly equivalent when factors and factor loadings are estimated from the constrained least squares. We denote two equivalent estimators $\hSig_{f|y}^1$ and $\hSig_{f|y}^2$ by $\hSig_{f|y}$. As in Remark~\ref{rem2.1}, these two are in general not equivalent.

In Section 3, we shall show that under mild conditions, $\bSigma_{f|y}$ is consistently estimated by $\hbSig_{f|y}$ as $p,T\rightarrow \infty$. Furthermore, the eigenvectors of $\hbSig_{f|y}$ corresponding to the $L$ largest eigenvalues, denoted as $\hpsi_j$ $(j=1,\cdots,L)$, will converge to the corresponding eigenvectors of $\bSigma_{f|y}$, which actually span the aforementioned central subspace $S_{y|\bff}$. This will yield consistent estimates of sufficient predictive indices $\hpsi_1'\hf_t,\cdots,\hpsi_L'\hf_t$. Given these estimated low-dimensional indices, we may employ one of the well-developed nonparametric regression techniques to estimate $h(\cdot)$ and make the forecast, including local polynomial modeling, regression splines, additive modeling, and many others \citep{green-silverman-1993,fan-1996,li-racine-2007}. We summarize the sufficient forecasting procedure in Algorithm \ref{Algo2}.

\begin{algorithm}[H]
\caption{Sufficient forecasting using factor models}
\smallskip
\begin{itemize}
  \item Step 1: Obtain the estimated factors $\{\hf_t\}_{t=1,\ldots,T}$ from \eqref{eq2.7} and \eqref{eq2.8};
  \item Step 2: Construct $\hSig_{f|y}$ as in \eqref{eq2.9} or \eqref{eq2.10};
  \item Step 3:  Obtain $\hpsi_1,\cdots,\hpsi_L$ from the $L$ largest eigenvectors of $\hSig_{f|y}$;
  \item Step 4: Construct the predictive indices $\hpsi_1'\hf_t,\cdots,\hpsi_L'\hf_t$ and forecast $y_{t+1}$.
\end{itemize}
\label{Algo2}
\end{algorithm}

To make the forecasting procedure concrete, we elucidate how factors and factor loadings are estimated. We temporarily assume that the number of underlying factors $K$ is known to us. 
Consider the following constrained least squares problem:
\bea
(\hB_K,\hF_K) &=\text{arg}\min\limits_{(\bB,\bF)}&\|\bX-\bB\bF'\|^2_F,         \label{eq2.7}\\
&\text{subject to}& T^{-1}\bF'\bF=\bI_K, \quad \bB'\bB \text{ is diagonal,}  \label{eq2.8}
\eea
where $\bX=(\bx_1,\cdots,\bx_T)$, $\bF' =(\bff_1,\cdots,\bff_T)$, and $\|\cdot\|_F$ denotes the Frobenius norm. This is a classical principal components problem, and it has been widely used to extract underlying common factors \citep{stock-watson-2002,bai-2002,fan-2013,bai-ng-2013}. The constraints in (\ref{eq2.8}) correspond to the normalization (\ref{eq2.4}). The minimizers $\hF_K$ and $\hB_K$ are such that the columns of $\hF_K/\sqrt{T}$ are the eigenvectors corresponding to the $K$ largest eigenvalues of the $T\times T$ matrix $\bX'\bX$ and $$\hB_K=T^{-1}\bX\hF_K.$$ To simplify notation, we use $\hB=\hB_K$, $\hF=\hF_K$ and $\hF'=(\hf_1,\cdots,\hf_T)$ throughout this paper.

To effectively estimate $\bSigma_{f|y}$ in \eqref{eq2.5} and \eqref{eq2.6}, we shall replace the conditional expectations $E(\bff_t|y_{t+1}\in I_h)$ and $E(\bx_t|y_{t+1}\in I_h)$ by their sample counterparts. Denote the ordered statistics of $\{(y_{t+1},\hf_t)\}_{t=1,\ldots,T-1}$ by $\{(y_{(t+1)},\hf_{(t)})\}_{t=1,\ldots,T-1}$ according to the values of $y$, where $y_{(2)}\leq\cdots\leq y_{(T)}$ and we only use information up to time $T$. We divide the range of $y$ into $H$ slices, where $H$ is typically fixed. Each of the first $H-1$ slices contains the same number of observations $c>0$, and the last slice may have less than $c$ observations, which exerts little influence asymptotically. For ease of presentation, we introduce a double script $_{(h,j)}$ in which $h=1,\ldots,H$ refers to the slice number and $j=1,\ldots,c$ is the index of an observation in a given slice. Thus, we can write $\{(y_{(t+1)},\hf_{(t)})\}_{t=1,\ldots,T-1}$ as
$$
\{(y_{(h,j)},\hf_{(h,j)}): \ y_{(h,j)}=y_{(c(h-1)+j+1)}, \ \hf_{(h,j)}=\hf_{(c(h-1)+j)}\}_{h=1,\ldots,H; \ j=1,\ldots,c}.
$$

Based on the estimated factors $\hf_t$, we have the estimate $\hSig_{f|y}^1$ in the form of
\begin{equation}
\hSig_{f|y}^1=\frac{1}{H}\sum_{h=1}^H[\frac{1}{c}\sum_{l=1}^c\hf_{(h,l)}][\frac{1}{c}\sum_{l=1}^c\hf_{(h,l)}]'.  \label{eq2.9}
\end{equation}
Analogously, by using \eqref{eq2.6}, we have an alternative estimator:
\begin{equation}
\hSig_{f|y}^2=\hLam_b(\frac{1}{H}\sum_{h=1}^H[\frac{1}{c}\sum_{l=1}^c\bx_{(h,l)}][\frac{1}{c}\sum_{l=1}^c\bx_{(h,l)}]')\hLam_b'.  \label{eq2.10}
\end{equation}
where $\hLam_b=(\hB'\hB)^{-1}\hB$ is the plug-in estimate of $\bLam_b$.
Two estimates of $\bSigma_{f|y}$ are seemingly different: $\hSig_{f|y}^1$ based on estimated factors and $\hSig_{f|y}^2$ based on estimated loadings. When factors and loadings are estimated using \eqref{eq2.7} and \eqref{eq2.8}, the following proposition shows that $\hSig_{f|y}^1$ and $\hSig_{f|y}^2$ are the same.

\begin{prop} \label{prop2.1}
Let $\hf_t$ and $\hB$ be estimated by solving
the constrained least square problem in \eqref{eq2.7} and \eqref{eq2.8}. Then, the two estimators (\ref{eq2.9}) and (\ref{eq2.10}) of $\bSigma_{f|y}$ are equivalent, i.e.
$$\hSig_{f|y}^1 = \hSig_{f|y}^2.$$
\end{prop}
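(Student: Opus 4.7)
The plan is to reduce the claimed identity $\hSig_{f|y}^1 = \hSig_{f|y}^2$ to the single pointwise relation
\ben
\hLam_b \bx_t \;=\; \hf_t, \qquad t=1,\ldots,T.
\een
Once this is in hand, I can push the (constant) matrix $\hLam_b$ inside the inner averages in \eqref{eq2.10}, giving $\frac{1}{c}\sum_l \hLam_b \bx_{(h,l)} = \frac{1}{c}\sum_l \hf_{(h,l)}$, and therefore $\hSig_{f|y}^2$ collapses term-by-term to $\hSig_{f|y}^1$. So the entire proof rests on this one linear-algebraic identity.

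To establish $\hf_t = \hLam_b\bx_t$, I would exploit the explicit PCA solution of the constrained least squares problem \eqref{eq2.7}--\eqref{eq2.8}. First, since the columns of $\hF/\sqrt{T}$ are the top-$K$ eigenvectors of $\bX'\bX$ with some diagonal eigenvalue matrix $\bLam_K$, we have $\bX'\bX\,\hF = \hF\,\bLam_K$, together with the normalization $\hF'\hF = T\bI_K$. Second, the definition $\hB = T^{-1}\bX\hF$ gives
\ben
\hB'\hB \;=\; T^{-2}\hF'\bX'\bX\hF \;=\; T^{-2}\hF'\hF\,\bLam_K \;=\; T^{-1}\bLam_K,
\een
which is diagonal (reconfirming the identifiability constraint) and invertible, so $(\hB'\hB)^{-1} = T\,\bLam_K^{-1}$.

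Third, for any $t$, $\hB'\bx_t = T^{-1}\hF'\bX'\bx_t$, and extracting the $t$-th column of the eigen-equation $\hF'\bX'\bX = \bLam_K\hF'$ yields $\hF'\bX'\bx_t = \bLam_K\hf_t$. Combining,
\ben
\hLam_b\bx_t \;=\; (\hB'\hB)^{-1}\hB'\bx_t \;=\; (T\,\bLam_K^{-1})\bigl(T^{-1}\bLam_K\hf_t\bigr) \;=\; \hf_t,
\een
which is exactly the identity we needed. Substituting into $\hSig_{f|y}^2$ finishes the proof.

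There is really no serious obstacle — the argument is bookkeeping around the eigen-decomposition. The only thing to be careful about is the convention for $\hF$ (it is $T\times K$ with rows $\hf_t'$, so $\bx_t$ corresponds to the $t$-th column of both $\bX$ and $\hF'$), which matters for correctly interpreting $\hF'\bX'\bx_t$ as picking out the $t$-th column of $\bLam_K\hF'$. Once this indexing is set up consistently, the identity drops out immediately from the two facts $\hB = T^{-1}\bX\hF$ and $\bX'\bX\hF = \hF\bLam_K$, and the equivalence of the two sliced covariance estimators follows.
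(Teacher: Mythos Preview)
Your proposal is correct and follows essentially the same approach as the paper: both reduce the claim to the identity $\hLam_b\bx_t = \hf_t$ (equivalently $\hLam_b\bX = \hF'$), and both derive it from the eigen-relation $\bX'\bX\,\hF = \hF\,\bLam_K$ together with $\hB = T^{-1}\bX\hF$ to compute $\hB'\hB = T^{-1}\bLam_K$ and then $(\hB'\hB)^{-1}\hB'\bX = \hF'$. The only cosmetic difference is that you work column-by-column via $\bx_t$ while the paper writes the same computation in one matrix line.
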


\smallskip
\begin{remark}
\label{rem2.1}
There are alternative ways for estimating factors $\bF$ and factor loadings $\bB$. For example, \cite{forni-2000,forni-2015} studied factor estimation based on projection, \cite{liu2012high}, \cite{xue2012regularized} and \cite{fxz-2015} employed rank-based methods for robust estimation of the covariance matrices, and \cite{connor-2012} applied a weighted additive nonparametric estimation procedure to estimate characteristic-based factor models.  These methods do not necessarily lead to the equivalence of $\hSig_{f|y}^1$ and $\hSig_{f|y}^2$ as in Proposition \ref{prop2.1}. However, the proposed sufficient forecasting will benefit from using generalized dynamic factor models \citep{forni-2000,forni-2015} or more robust factor models. This would go beyond the scope of this paper, and is left for future research.
\end{remark}

\subsection{Sufficient forecasting with projected principal components}

Often in econometrics and financial applications, there may exist explanatory variables to model the loading matrix of factor models \citep{connor-2012,fan-2015}. In such cases, the loading coefficient $\bb_i$ in \eqref{eq2.12} can be modeled by nonparametric functions $\bb_i=\bg(\bz_i)+\bgamma_i$, where $\bz_i$ denotes the vector of $d$ subject-specific covariates, and $\bgamma_i$ is the part of $\bb_i$ that can not be explained by the covariates $\bz_i$. Suppose that $\bgamma_i$'s are independently identically distributed with mean zero, and they are independent of $\bz_i$, $u_{it}$, and $\epsilon_{t+1}$.

Now we consider the following semi-parametric factor model to forecast $y_{t+1}$:
\bea
y_{t+1}&=&h(\bphi_1'\bff_t,\cdots, \bphi_L'\bff_t,\epsilon_{t+1}),  \label{eq2.11}\\
x_{it}&=&(\bg(\bz_i)+\bgamma_i)'\bff_t+u_{it},\quad 1\leq i\leq p,\ 1\leq t\leq T, \label{eq2.12}
\eea
where $\bphi_1,\ldots,\bphi_L$, $\epsilon_{t+1}$, $\bff_t$, and $u_{it}$ follow the same assumptions in Section 2.1. Note that \eqref{eq2.12} has the matrix form of $\bX=(\bG(\bZ)+\bGamma)\bF'+\bU$, which decomposes the factor loading matrix $\bB$ into the subject-specific component $\bG(\bZ)$ and the orthogonal residual component $\bGamma$. Following \cite{fan-2015}, the unobserved latent vectors $\bff_t$ in the semi-parametric factor model \eqref{eq2.12} can be  better estimated by an order of magnitude, as long as the contributions of $\bz_i$ are genuine.

Given covariates $\bz_i$, we first use the projected PCA \citep{fan-2015} to estimate latent factors. 
Projected PCA is implemented by projecting the predictors $\bX$ onto the sieve space spanned by $\{\bz_1,\ldots,\bz_p\}$ and applying the conventional PCA on the projected predictors $\hX$. As an example, we use additive spline model to approximate the function $g$ in (\ref{eq2.12}). Let $\varphi_1,\cdots,\varphi_J$ is a set of $J$ basis functions. Then, we define $\bvarphi(\bz_i)=(\varphi_1(z_{i1}),\ldots,\varphi_J(z_{i1}),\cdots,\varphi_1(z_{id}),\ldots,\varphi_J(z_{id}))'$, and $\bvarphi(\bZ)=(\bvarphi(\bz_1),\cdots,\bvarphi(\bz_p))'$ as a $p$ by $Jd$ design matrix, created by fitting additive spline models. Then, the projected predictors
(or smoothed predictors by using additive model) is
\begin{equation}\label{eq2.13}
\hX=\bP\bX,
\end{equation}
where
$
\bP=\bvarphi(\bZ) \left(\bvarphi(\bZ)'\bvarphi(\bZ)\right)^{-1}\bvarphi(\bZ)'\bX
$
is the projection matrix onto the sieve space. The projected PCA is to run PCA on the projected matrix $\hX$ to extract the latent factor $\hF$.
The advantage of this is that $\hF$ is estimated far more accurately, according to \cite{fan-2015}.  Note that the projection matrix $\bP$ was constructed by using the additive spline model.  It can also be constructed by using the multivariate kernel function.

After obtaining $\hF$, we simply follow Section 2.2 to obtain SDR directions $\hpsi_1,\cdots,\hpsi_L$ and sufficient predictive indices $\hpsi_1'\hf_t,\cdots,\hpsi_L'\hf_t$ to forecast forecast $y_{t+1}$.


\subsection{Choices of tuning parameters}

Sufficient forecasting includes determination of the number of slices $H$, the number of predictive indices $L$, and the number of factors $K$. In practice, $H$ has little influence on the estimated directions, which was first pointed out in \cite{li-1991} and can be seen from
(\ref{eq2.5c}). The choice of $H$ differs from that of a smoothing parameter in nonparametric regression, which may lead to a slower rate of convergence. As shown in Section 3, we always have the same rate of convergence for $\hphi_i$ no matter how $H$ is specified in the sufficient forecasting. The reason is that (\ref{eq2.5c}) gives the same eigenspace as long as $H\ge \max\{L,2\}$.

In terms of the choice of $L$, the first $L$ eigenvalues of $\bSigma_{f|y}$ must be significantly different from zero compared to the estimation error. Several methods such as \cite{li-1991} and \cite{Schott-1994} have been proposed to determine $L$. For instance, the average of the smallest $K-L$ eigenvalues of $\bSigma_{f|y}$ would follow a
$\chi^2$-distribution if the underlying factors are normally distributed, where $K$ is the number of underlying factors.  Should that average be large, there would be at least $L+1$ predictive indices in the sufficient forecasting model. 

Pertaining to many factor analysis, the number of latent factors $K$ might be unknown to us. There are many existing approaches to determining $K$ in the literature, e.g.,  \cite{bai-2002}, \cite{hallin-2007}, \cite{alessi-2010}. Recently, \cite{lam-yao-2012} and \cite{ahn-horenstein-2013} proposed a ratio-based estimator by maximizing the ratio of two adjacent eigenvalues of $\bX'\bX$ arranged in descending order, i.e.
$$\hat{K} = \argmax_{1\leq i \leq kmax} \hat{\lambda}_i/\hat{\lambda}_{i+1},$$
where $\hat{\lambda}_1\geq\cdots\geq\hat{\lambda}_T$ are the eigenvalues. The estimator enjoys good finite-sample performances and was motivated by the following observation: the $K$ largest eigenvalues of  $\bX'\bX$ grow unboundedly as $p$ increases, while the others remain bounded. \cite{fan-2015} proposed a similar ratio-based estimator for estimating the number of factors in semiparametric factor models. We note here that once a consistent estimator of $K$ is found, the asymptotic results in this paper hold true for the unknown $K$ case by a conditioning argument. Unless otherwise specified, we shall assume a known $K$ in the sequel.

\section{Asymptotic properties}\label{sec2.3}

We define some necessary notation here. For a vector $\bv$, $\|\bv\|$ denotes its Euclidean norm. For a matrix $\bM$, $\|\bM\|$ and $\|\bM\|_1$ represent its spectral norm and $\ell_1$ norm respectively, defined as the largest singular value of $\bM$ and its maximum absolute column sum. $\|\bM\|_\infty$ is its matrix $\ell_\infty$ norm defined as the maximum absolute row sum. For a symmetric matrix $\bM$, $\|\bM\|_1=\|\bM\|_\infty$. Denote by $\lambda_{\min}(\cdot)$ and $\lambda_{\max}(\cdot)$ the smallest and largest eigenvalue of a symmetric matrix respectively.

\subsection{Assumptions}
We first detail the assumptions on the forecasting model (\ref{eq2.1}) and the associated factor model (\ref{eq2.2}), in which common factors $\{\bff_t\}_{t=1,\ldots,T}$ and the forecasting function $h(\cdot)$ are unobserved and only $\{(\bx_t,y_t)\}_{t=1,\ldots,T}$ are observable.

\begin{assum}[Factors and Loadings] \label{a1}
For some $M>0$, \\
 (1) The loadings $\bb_i$ satisfy that $\|\bb_i\|\leq M$ for $i=1,\ldots,p$. As $p\rightarrow\infty$, there exist two positive constants $c_1$ and $c_2$ such that $$c_1<\lambda_{\min}(\frac{1}{p}\bB'\bB)<\lambda_{\max}(\frac{1}{p}\bB'\bB)<c_2.$$
(2) Identification: $T^{-1}\bF'\bF =\bI_K$, and $\bB'\bB$ is a diagonal matrix with distinct entries. \\
(3) Linearity: $E(\bb'\bff_t|\bphi_1'\bff_t,\cdots,\bphi_L'\bff_t)$ is a linear function of $\bphi_1'\bff_t,\cdots,\bphi_L'\bff_t$ for  any $\bb\in\R^p$, where $\bphi_i$'s come from model (\ref{eq2.1}).
\end{assum}

Condition (1) is often known as the \textit{pervasive} condition \citep{bai-2002,fan-2013} such that the factors impact a non-vanishing portion of the predictors. Condition (2) corresponds to the PC1 condition in \cite{bai-ng-2013}, which eliminates rotational inderterminacy in the individual columns of $\bF$ and $\bB$. Condition (3) ensures that the (centered) inverse regression curve $E(\bff_t|y_{t+1})$ is contained in the central subspace, and it is standard in the dimension reduction literature. The linearity condition is satisfied when the distribution of $\bff_t$ is elliptically symmetric \citep{eaton-1983}, and it is also asymptotically justified when the dimension of $\bff_t$ is large \citep{hall-li-1993}. 

We impose the strong mixing condition on the data generating process. Let $\mathcal{F}_{\infty}^0$ and $\mathcal{F}_T^{\infty}$ denote the $\sigma-$algebras generated by $\{(\bff_t,\bu_t,\epsilon_{t+1}): t\leq 0\}$ and $\{(\bff_t,\bu_t,\epsilon_{t+1}): t\geq T\}$ respectively. Define the mixing coefficient
$$\alpha(T)=\sup\limits_{A\in\mathcal{F}_{\infty}^0, B\in\mathcal{F}_T^{\infty}} |P(A)P(B)-P(AB)|.$$

\begin{assum}[Data generating process] \label{a2}
$\{\bff_t\}_{t\geq 1}$, $\{\bu_t\}_{t\geq 1}$ and $\{\epsilon_{t+1}\}_{t\geq 1}$ are three strictly stationary processes, and they are mutually independent. The factor process also satisfies that $E\|\bff_t\|^4<\infty$ and $E(\|\bff_t\|^2|y_{t+1}) < \infty$. In addition, the mixing coefficient $\alpha(T)<c\rho^T$ for all $T\in\mathbb{Z}^+$ and some $\rho\in(0,1)$.
\end{assum}

The independence between $\{\bff_t\}_{t\geq 1}$ and $\{\bu_t\}_{t\geq 1}$ (or $\{\epsilon_{t+1}\}_{t\geq 1}$) is similar to Assumption A(d) in \cite{bai-ng-2013}. The independence between $\{\bu_t\}_{t\geq 1}$ and $\{\epsilon_{t+1}\}_{t\geq 1}$, however, can be relaxed, making the data generation process more realistic. For example, we just need to assume that $E(\bu_t|y_{t+1}) = 0$ for $t\geq 1$ for the subsequent theories to hold. Nonetheless, we stick to this simplified assumption for clarity.


Moreover, we make the following assumption on the residuals and dependence of the factor model \eqref{eq2.2}. Such conditions are similar to those in \cite{bai-2003}, which are needed to consistently estimate the common factors as well as the factor loadings.

\begin{assum}[Residuals and Dependence] \label{a3}
There exists a positive constant $M<\infty$ that does not depend on $p$ and $T$, such that\\
(1) $E(\bu_t)=\bzero$, and $E|u_{it}|^8\leq M$. \\
(2) $\|\bSigma_u\|_1\leq M$, and for every $i,j,t,s>0$, $(pT)^{-1}\sum_{i,j,t,s}|E(u_{it}u_{js})|\leq M$  \\
(3) For every $(t,s)$, $E|p^{-1/2}( \bu_s'\bu_t-E(\bu_s\bu_t) )|^4\leq M$.
\end{assum}

\subsection{Rate of convergence}

The following theorem establishes the convergence rate of the sliced covariance estimate of inverse regression curve (i.e. $\hSig_{f|y}$ as in \eqref{eq2.9} or \eqref{eq2.10}) under the spectral norm. It further implies the same convergence rate of the estimated SDR directions associated with sufficient predictive indices. For simplicity, we assume that $K$ and $H$ are fixed, though they can be extended to allow $K$ and $H$ to depend on $T$.  This assumption enables us to obtain faster rate of convergence.  Note that the regression indices $\{\bphi_i\}_{i=1}^L$ and the PCA directions $\{\bpsi_i\}_{i=1}^L$ are typically different, but they can span the same linear space.

\begin{thm} \label{thm1}
Suppose that Assumptions \ref{a1}-\ref{a3} hold and let $\omega_{p,T}=p^{-1/2}+T^{-1/2}$.
Then under the forecasting model (\ref{eq2.1}) and the associated factor model (\ref{eq2.2}), we have
\begin{equation} \label{eq3.1}
\|\hbSig_{f|y} - \bSigma_{f|y}\|=O_p(\omega_{p,T}).
\end{equation}
If the $L$ largest eigenvalues of $\bSigma_{f|y}$ are positive and distinct, the eigenvectors $\hpsi_1,\cdots,\hpsi_L$ associated with the $L$ largest eigenvalues of $\hbSig_{f|y}$ give the consistent estimate of directions $\bpsi_1,\cdots,\bpsi_L$ respectively, with rates
\begin{equation} \label{eq3.2}
\|\hpsi_j - \bpsi_j\|=O_p(\omega_{p,T}),
\end{equation}
for $j=1,\cdots,L$, where $\bpsi_1,\cdots,\bpsi_L$ form an orthonormal basis for the central subspace $S_{y|\bff}$ spanned by $\bphi_1,\cdots,\bphi_L$.
\end{thm}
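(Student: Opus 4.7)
The plan is to prove \eqref{eq3.1} via a two-source error decomposition --- one from PCA-based factor extraction and one from sliced sample averaging --- and then upgrade to the eigenvector rate \eqref{eq3.2} by a Davis--Kahan perturbation argument. By Proposition \ref{prop2.1} it suffices to work with $\hSig_{f|y}^1$ in \eqref{eq2.9}. Let $\bmu_h = E(\bff_t\mid y_{t+1}\in I_h)$, $\bar{\bff}_h = c^{-1}\sum_{l=1}^c \bff_{(h,l)}$, and $\bar{\hf}_h = c^{-1}\sum_{l=1}^c \hf_{(h,l)}$, so that
\[
\hSig_{f|y}^1 - \bSigma_{f|y} \;=\; \frac{1}{H}\sum_{h=1}^H\bigl(\bar{\hf}_h \bar{\hf}_h' - \bmu_h\bmu_h'\bigr).
\]
Each summand further splits as $(\bar{\hf}_h - \bar{\bff}_h)(\bar{\hf}_h - \bar{\bff}_h)' + (\bar{\hf}_h - \bar{\bff}_h)\bar{\bff}_h' + \bar{\bff}_h(\bar{\hf}_h - \bar{\bff}_h)' + (\bar{\bff}_h\bar{\bff}_h' - \bmu_h\bmu_h')$, cleanly isolating the factor-estimation error from the classical SIR sampling error.

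For the first three pieces, standard PCA consistency for large-$p$ factor models under Assumptions \ref{a1}--\ref{a3} gives $T^{-1}\sum_{t=1}^T \|\hf_t - \bff_t\|^2 = O_p(\omega_{p,T}^2)$, where the generic PCA rotation matrix collapses to a sign ambiguity by the PC1 normalization in Assumption \ref{a1}(2) (diagonal $\bB'\bB$ with distinct entries). Since $c \asymp T/H$ with $H$ fixed, Cauchy--Schwarz yields $\|\bar{\hf}_h - \bar{\bff}_h\|^2 \leq c^{-1}\sum_l\|\hf_{(h,l)} - \bff_{(h,l)}\|^2 = O_p(\omega_{p,T}^2)$ uniformly in $h$. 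Combined with $\|\bar{\bff}_h\| = O_p(1)$ via the fourth-moment bound in Assumption \ref{a2}, these three terms together contribute $O_p(\omega_{p,T})$ in spectral norm.

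For the SIR piece, I aim for $\bar{\bff}_h - \bmu_h = O_p(T^{-1/2})$. The indices $(h,l)$ depend on the order statistics of $\{y_{t+1}\}$, so $\bar{\bff}_h$ is not a plain sample mean; following the classical SIR template (Hsing--Carroll 1992, Zhu--Ng 1995), I would compare slicing by the true $(h/H)$-quantiles of $y$ against slicing by empirical quantiles, the mismatched indices costing only $O_p(T^{-1/2})$ in the slice mean. The ``true-quantile'' slice mean then obeys the $\sqrt{T}$-rate from a mixing CLT-type bound enabled by Assumption \ref{a2} and the conditional moment bound $E(\|\bff_t\|^2\mid y_{t+1}) < \infty$. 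Squaring and using $\|\bmu_h\| = O_p(1)$ gives $\|\bar{\bff}_h\bar{\bff}_h' - \bmu_h\bmu_h'\| = O_p(T^{-1/2})$, which absorbs into $O_p(\omega_{p,T})$ and concludes \eqref{eq3.1}.

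To deduce \eqref{eq3.2}, the hypothesis that the top $L$ eigenvalues of $\bSigma_{f|y}$ are positive and distinct supplies a spectral gap $\delta > 0$. The Davis--Kahan $\sin\Theta$ theorem (in its Yu--Wang--Samworth form for individual eigenvectors) then gives, after the conventional sign alignment, $\|\hpsi_j - \bpsi_j\| \leq C\delta^{-1}\|\hSig_{f|y} - \bSigma_{f|y}\| = O_p(\omega_{p,T})$, as claimed; that $\bpsi_1,\ldots,\bpsi_L$ form an orthonormal basis of $S_{y|\bff}$ follows from the representation \eqref{eq2.5c}. The main difficulty I anticipate is coordinating the PCA rotation (and its collapse to sign ambiguity under PC1) with the random slicing of SIR so that the two error sources remain clearly separated and the bounds are uniform in $h$; both ingredients are classical in isolation, but fusing them in the latent-factor regime --- in particular checking that the order statistics of $\{y_{t+1}\}$ do not inflate the factor-estimation error through dependence with $\{\bu_t\}$ --- is the delicate part.
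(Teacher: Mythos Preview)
Your proposal is correct and takes a genuinely different decomposition route from the paper. The paper works through the loading-based representation $\hSig_{f|y}^2$ in \eqref{eq2.10}: it first establishes $\|\hLam_b-\bLam_b\|=O_p(p^{-1/2}\omega_{p,T})$ as a separate lemma, then bounds the slice means $\hm_h=c^{-1}\sum_l\bx_{(h,l)}$ of the \emph{observed} predictors, and finally assembles $\|\hLam_b\hm_h-\bLam_b\bm_h\|=O_p(\omega_{p,T})$. You instead stay with $\hSig_{f|y}^1$ and split directly into the factor-estimation piece (controlled by $T^{-1}\sum_t\|\hf_t-\bff_t\|^2=O_p(\omega_{p,T}^2)$, transferred to each slice via the observation that reindexing by the order statistics of $y$ is just a permutation of $\{1,\ldots,T\}$) and the classical SIR sampling piece. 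Your route is more direct for this theorem and is arguably more careful about the empirical-versus-population quantile issue in the SIR step, which the paper dispatches in one sentence; the paper's route, on the other hand, produces the $\hLam_b$ bound as a reusable lemma that is needed independently for Corollary \ref{cor3.1} (equation \eqref{eq3.4}, which expresses the predictive indices in terms of the observable $\bx_t$). One minor sharpening: under the PC1 identification in Assumption \ref{a1}(2) the rotation matrix $\bH$ is not merely a sign matrix but satisfies $\|\bH-\bI_K\|=O_p(\omega_{p,T}^2)$ (this is the paper's Lemma \ref{lem2}, from Bai and Ng, 2013), which is exactly what lets you write $\|\hf_t-\bff_t\|$ rather than $\|\hf_t-\bH\bff_t\|$ and makes the sign alignment you invoke at the Davis--Kahan step automatic. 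The Davis--Kahan argument for \eqref{eq3.2} is the same in both proofs.
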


When $L=1$ and the link function $h(\cdot)$ in model \eqref{eq2.1} is linear, \cite{stock-watson-2002} concluded that having estimated factors as regressors in the linear forecasting does not affect consistency of the parameter estimates in a forecasting equation. Theorem \ref{thm1} extends the theory, showing that it is also true for any unknown link function $h(\cdot)$ and multiple predictive indices by using the sufficient forecasting. The two estimates coincide in the linear forecasting case, but our procedure can also deal with the nonlinear forecasting cases.

\begin{remark}
Combining Theorem \ref{thm1} and \cite{fan-2015}, we could obtain the rate of convergence for sufficient forecasting using semiparametric factor models (i.e., \eqref{eq2.11}-\eqref{eq2.12}). Under some conditions, when $J=(p \min\{T,p,v^{-1}_p\})^{1/\kappa}$ and $p,T\rightarrow\infty$, we have
$$
\|\hbSig_{f|y} - \bSigma_{f|y}\|=O_p(p^{-1/2}\cdot (p \min\{T,p,v^{-1}_p\})^{\frac{1}{2\kappa}-\frac{1}2}+T^{-1/2}),
$$
where $\kappa\ge 4$ is a constant characterizing the accuracy of sieve approximation (see Assumption 4.3 of \cite{fan-2015}), and $v_p=\max_{k}\frac{1}p\sum_{i}\var(\gamma_{ik})$ . Then, for $j=1,\cdots,L$, we have
$$
\|\hpsi_j - \bpsi_j\|=O_p(p^{-1/2}\cdot (p \min\{T,p,v^{-1}_p\})^{\frac{1}{2\kappa}-\frac{1}2}+T^{-1/2}),
$$
where $\bpsi_1,\cdots,\bpsi_L$ form an orthonormal basis for $S_{y|\bff}$, when the $L$ largest eigenvalues of $\bSigma_{f|y}$ are positive and distinct. It is obvious that $(p \min\{T,p,v^{-1}_p\})^{\frac{1}{2\kappa}-\frac{1}2}$ $\rightarrow 0$ even when $T$ is finite. Thus, with covariates $\bZ$, the sufficient forecasting achieves a faster rate of convergence by using the projected PCA \citep{fan-2015}. For space consideration, we do not present technical details in this paper.
\end{remark}

As a consequence of Theorem \ref{thm1}, the sufficient predictive indices can be consistently estimated as well. We present this result in the following corollary.

\begin{cor} \label{cor3.1}
Under the same conditions of Theorem \ref{thm1}, for any $j=1, \ldots, L$, we have
\begin{equation}\label{eq3.3}
\hpsi_j'\hf_t\rightarrow_p\bpsi_j\bff_t.
\end{equation}
Translating into the original predictors $\bx_t$ and letting $\hxi_j=\hLam_b'\hpsi_j$, we then have
\begin{equation}\label{eq3.4}
\hxi_j'\bx_t\rightarrow_p\bpsi_j\bff_t.
\end{equation}
\end{cor}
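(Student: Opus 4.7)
\medskip
\noindent\textbf{Proof proposal for Corollary \ref{cor3.1}.}

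The plan is to reduce both claims to Theorem \ref{thm1} and to standard factor-model consistency for $\hf_t$. Begin with the following decomposition, which localizes the two sources of error:
\begin{equation*}
\hpsi_j'\hf_t - \bpsi_j'\bff_t \;=\; \hpsi_j'(\hf_t-\bff_t) \;+\; (\hpsi_j-\bpsi_j)'\bff_t.
\end{equation*}
Since $\|\hpsi_j\|=1$, the first term is bounded in absolute value by $\|\hf_t-\bff_t\|$; by the Cauchy--Schwarz inequality the second is bounded by $\|\hpsi_j-\bpsi_j\|\cdot\|\bff_t\|$. I would therefore establish (i) a pointwise consistency statement $\|\hf_t-\bff_t\|=o_p(1)$, and (ii) use Theorem \ref{thm1} to bound $\|\hpsi_j-\bpsi_j\|=O_p(\omega_{p,T})$, while noting that $\|\bff_t\|=O_p(1)$ by Assumption \ref{a2} (which gives $E\|\bff_t\|^4<\infty$ and strict stationarity).

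For step (i), the argument is standard PCA-under-identification: under Assumptions \ref{a1}--\ref{a3}, the results of \cite{bai-2003} (and \cite{bai-ng-2013} for the PC1 identification adopted in Assumption \ref{a1}(2)) imply that for every fixed $t$ there exists a rotation matrix $\bH$ with $\hf_t-\bH'\bff_t=O_p(\omega_{p,T})$, and that the identification in Assumption \ref{a1}(2) forces $\bH\to\bI_K$ in probability (up to signs, which can be absorbed without loss of generality since the signs of $\hB$'s columns are tied to those of $\hF$'s). Thus $\|\hf_t-\bff_t\|=O_p(\omega_{p,T})=o_p(1)$. Combining with Theorem \ref{thm1} in the decomposition gives $\hpsi_j'\hf_t-\bpsi_j'\bff_t = o_p(1)$, which is \eqref{eq3.3}.

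For the second claim \eqref{eq3.4}, the key observation is that under the PC construction \eqref{eq2.7}--\eqref{eq2.8}, the estimated factors and loadings satisfy the identity $\hf_t=\hLam_b\bx_t$. Indeed, from $\hB=T^{-1}\bX\hF$ together with the eigen-equation $T^{-1}\bX'\bX\hF=\hF\bLam_K$ (where $\bLam_K=\hB'\hB$), one obtains $\hF=\bX'\hB(\hB'\hB)^{-1}$, so that the $t$-th row reads $\hf_t=(\hB'\hB)^{-1}\hB'\bx_t=\hLam_b\bx_t$; this is precisely the identity underlying the equivalence in Proposition \ref{prop2.1}. Consequently, with $\hxi_j=\hLam_b'\hpsi_j$,
\begin{equation*}
\hxi_j'\bx_t \;=\; \hpsi_j'\hLam_b\bx_t \;=\; \hpsi_j'\hf_t,
\end{equation*}
and \eqref{eq3.4} follows from \eqref{eq3.3}.

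The main obstacle is step (i): verifying pointwise consistency of $\hf_t$ at a fixed $t$ (as opposed to the easier average $T^{-1}\sum_t\|\hf_t-\bff_t\|^2$). I would invoke the pointwise bound from \cite{bai-2003} directly, taking care to reconcile the rotation matrix appearing in that reference with the identification in Assumption \ref{a1}(2); this sign/rotation reconciliation is the only subtlety, and it is standard under the PC1 normalization. Once this is in hand, the corollary is a short triangle-inequality argument built on Theorem \ref{thm1} and the identity $\hf_t=\hLam_b\bx_t$.
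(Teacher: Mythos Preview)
Your proposal is correct and matches the paper's implicit argument: the paper does not give a separate proof of Corollary \ref{cor3.1}, treating it as an immediate consequence of Theorem \ref{thm1} together with the identity $\hf_t=\hLam_b\bx_t$ established in Proposition \ref{prop2.1}. Your triangle-inequality decomposition, the pointwise factor consistency from \cite{bai-2003}/\cite{bai-ng-2013} combined with Lemma \ref{lem2} (which gives $\|\bH-\bI_K\|=O_p(\omega_{p,T}^2)$ under the PC1 identification, handling the rotation/sign issue you flag), and the use of Proposition \ref{prop2.1} for \eqref{eq3.4} are exactly the ingredients the paper has in place for this corollary.
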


Corollary \ref{cor3.1} further implies the consistency of our proposed sufficient forecasting by using the classical nonparametric estimation techniques \citep{green-silverman-1993,fan-1996,li-racine-2007}.

\begin{remark} In view of \eqref{eq3.3} and \eqref{eq3.4} in Corollary \ref{cor3.1}, the sufficient forecasting finds not only the sufficient dimension reduction directions $\hpsi_1,\cdots,\hpsi_L$ for estimated factors $\hf_t$ but also the sufficient dimension reduction directions $\hxi_1,\cdots,\hxi_L$ for predictor $\bx_t$. The sufficient forecasting effectively obtains the projection of the $p$-dimensional predictor $\bx_t$ onto the $L$-dimensional subspace $\hxi_1'\bx_t,\cdots,\hxi_L'\bx_t$ by using the factor model, where the number of predictors $p$ can be larger than the number of observations $T$. It is well-known that the traditional sliced inverse regression and many other dimension reduction methods are limited to either a fixed dimension or a diverging dimension that is smaller than the sample size \citep{zhu-etal-2006}. By condensing the cross-sectional information into indices, our work alleviates what plagues the sufficient dimension reduction in high-dimensional regimes, and the factor structure in (\ref{eq2.1}) and (\ref{eq2.2}) effectively reduces the dimension of predictors and extends the methodology and applicability of the sufficient dimension reduction.
\end{remark}

\subsection{Linear forecasting under link violation}

Having extracted common factors from the vast predictors, the PCR is a simple and natural method to run a multiple regression and forecast the outcome.  Such a
linear estimate is easy to construct and provides a benchmark for our analysis. 
When the underlying relationship between the response and the latent factors is nonlinear, directly applying linear forecasts would violate the link function $h(\cdot)$ in \eqref{eq2.2}. 
But what does this  method do in the case of model misspecification? In what follows, we shall see that asymptotically the linear forecast falls into the central subspace $S_{y|\bff}$, namely, it is a weighted average of the sufficient predictive indices.

With the underlying factors $\{\hf_t\}$ estimated as in (\ref{eq2.7}) and (\ref{eq2.8}),  the response $y_{t+1}$ is then regressed on $\hf_t$ via a multiple regression.  In this case, the design matrix is orthogonal due to the normalization (\ref{eq2.8}).  Therefore, the least-squares estimate of regression coefficients from regressing $y_{t+1}$ on $\hf_t$ is
\begin{equation}\label{eq3.5}
\hphi = \frac{1}{T-1}\sum_{t=1}^{T-1}y_{t+1}\hf_t.
\end{equation}

To study the behavior of regression coefficient $\hphi$, we shall assume normality of the underlying factors $\bff_t$. The following theorem shows that, regardless of the specification of the link function $h(\cdot)$, $\hphi$ falls into the central subspace $S_{y|\bff}$ spanned by $\bphi_1,\cdots,\bphi_L$ as $p,T\rightarrow\infty$. More specifically, it converges to the sufficient direction
$$
\bar{\bphi} = \sum_{i=1}^L E((\bphi_i'\bff_t)y_{t+1})\bphi_i,
$$
which is a weighted average of sufficient directions $\bphi_1,\cdots,\bphi_L$.
In particular, when the link function is linear and $L=1$, the PCR yields an asymptotically consistent direction.

\begin{thm} \label{thm2}
Consider model (\ref{eq2.1}) and (\ref{eq2.2}) under assumptions of Theorem \ref{thm1}. Suppose the factors $\bff_t$ are normally distributed and that $E(y_t^2) < \infty$. Then, we have
\begin{equation}
\|\hphi - \bar{\bphi}\| = O_p(\omega_{p,T}).
\end{equation}
\end{thm}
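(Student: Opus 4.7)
\textbf{Proof proposal for Theorem \ref{thm2}.} The plan is to split $\hphi - \bar{\bphi}$ into three pieces and control each separately:
\begin{equation*}
\hphi - \bar{\bphi}
= \underbrace{\frac{1}{T-1}\sum_{t=1}^{T-1} y_{t+1}(\hf_t-\bff_t)}_{\text{(I)}}
+ \underbrace{\frac{1}{T-1}\sum_{t=1}^{T-1} y_{t+1}\bff_t - E(y_{t+1}\bff_t)}_{\text{(II)}}
+ \underbrace{E(y_{t+1}\bff_t) - \bar{\bphi}}_{\text{(III)}}.
\end{equation*}
The bulk of the work is in (I); the other two terms are essentially a law-of-large-numbers step and an algebraic identity.

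First I would handle the deterministic piece (III). Under $\bff_t\sim N(\bzero,\bI_K)$, decompose $E(y_{t+1}\bff_t)$ along the orthonormal basis consisting of $\bphi_1,\ldots,\bphi_L$ together with any orthonormal completion $\bphi_{L+1},\ldots,\bphi_K$ of the orthogonal complement. For $i>L$, the random variable $\bphi_i'\bff_t$ is jointly normal with $(\bphi_1'\bff_t,\ldots,\bphi_L'\bff_t)$ with zero covariance, hence independent; since $\epsilon_{t+1}$ is independent of $\bff_t$, it follows from \eqref{eq2.1} that $\bphi_i'\bff_t$ is independent of $y_{t+1}$, giving $E(y_{t+1}\bphi_i'\bff_t)=0$. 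Projecting $E(y_{t+1}\bff_t)$ onto this basis therefore yields exactly $\sum_{i=1}^L E(y_{t+1}\bphi_i'\bff_t)\bphi_i=\bar{\bphi}$, so (III)$=\bzero$.

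For (II), the process $\{y_{t+1}\bff_t\}$ is strictly stationary and strongly mixing by Assumption \ref{a2}, and has finite second moment because $E(y_{t+1}^2)<\infty$ and $E\|\bff_t\|^4<\infty$. A standard Chebyshev/mixing-covariance argument (summing geometrically decaying covariances under $\alpha(T)\le c\rho^T$) then yields $\|\text{(II)}\|=O_p(T^{-1/2})$ coordinate-by-coordinate.

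Term (I) is the main obstacle and requires the factor-model asymptotics. The tools are identical to those underlying Theorem \ref{thm1}: under Assumptions \ref{a1}--\ref{a3} and the PC1 identification in (2), the constrained PCA estimators satisfy the sharp bound
\begin{equation*}
\frac{1}{T}\sum_{t=1}^{T}\|\hf_t-\bff_t\|^2 = O_p(\omega_{p,T}^2),
\end{equation*}
with the sign of each column of $\hF$ chosen via the diagonal entries of $\hB'\hB$ to match that of $\bF$ (this is where the distinct-eigenvalues assumption is used; note that $\bar{\bphi}$ itself is invariant under these sign flips because both $\bphi_i$ and $\bff_t$'s $i$-th coordinate are flipped together). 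Applying Cauchy--Schwarz,
\begin{equation*}
\|\text{(I)}\|\;\le\;\Bigl(\frac{1}{T-1}\sum_t y_{t+1}^2\Bigr)^{1/2}\Bigl(\frac{1}{T-1}\sum_t\|\hf_t-\bff_t\|^2\Bigr)^{1/2}=O_p(1)\cdot O_p(\omega_{p,T})=O_p(\omega_{p,T}),
\end{equation*}
where the first factor is $O_p(1)$ by the ergodic theorem and $E(y_{t+1}^2)<\infty$. Combining the three bounds gives $\|\hphi-\bar{\bphi}\|=O_p(\omega_{p,T})$.

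The main obstacle I foresee is term (I): one needs the averaged mean-square bound on $\|\hf_t-\bff_t\|$ rather than a uniform-in-$t$ bound, and one must carefully fix the sign convention so that $\hf_t$ is compared to the same $\bff_t$ that appears in $\bar{\bphi}$. This is where the PC1 identification (Assumption \ref{a1}(2)) and the distinctness of the eigenvalues of $p^{-1}\bB'\bB$ play their role, reducing the usual rotational indeterminacy to a sign choice under which $\bar{\bphi}$ is invariant.
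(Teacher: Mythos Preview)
Your proof is correct, and the identity (III)$=\bzero$ via orthogonal decomposition under normality is exactly what the paper does. The stochastic part, however, proceeds along a different route. The paper exploits Proposition~\ref{prop2.1} to write $\hf_t=\hLam_b\bx_t=\hLam_b(\bB\bff_t+\bu_t)$ and then splits $\hphi-\bar{\bphi}$ into three pieces controlled by $\|\hLam_b\bB-\bI_K\|=O_p(\omega_{p,T})$ (Lemma~\ref{lem3}), an explicit idiosyncratic term $\hLam_b\,\frac{1}{T-1}\sum_t\bu_t y_{t+1}$, and the same law-of-large-numbers term as your (II). Your approach bypasses the $\hLam_b$ machinery entirely: you use the averaged bound $T^{-1}\sum_t\|\hf_t-\bff_t\|^2=O_p(\omega_{p,T}^2)$ (which indeed follows from Bai--Ng's $T^{-1}\sum_t\|\hf_t-\bH\bff_t\|^2=O_p(\omega_{p,T}^2)$ together with Lemma~\ref{lem2}) and a single Cauchy--Schwarz step. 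This is shorter and avoids re-deriving properties of $\hLam_b$; the paper's decomposition, on the other hand, makes the role of the independence $\bu_t\perp y_{t+1}$ explicit and isolates the idiosyncratic contribution, which may be informative if one later relaxes that assumption. Your remark about the sign convention is well taken and more carefully stated than in the paper, where it is absorbed into Lemma~\ref{lem2}'s statement $\|\bH-\bI_K\|=O_p(\omega_{p,T}^2)$.
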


As a consequence of Theorem \ref{thm2}, the linear forecast $\hphi'\hf_t$ is a weighted average of the sufficient predictive indices $\bphi_1'\bff_t,\cdots,\bphi_L'\bff_t$.
\begin{cor} \label{cor3.2}
Under the same conditions of Theorem \ref{thm2}, we have
\begin{equation}
\hphi'\hf_t \rightarrow_p \bar{\bphi}'\bff_t = \sum_{i=1}^L E((\bphi_i'\bff_t)y_{t+1})\bphi_i'\bff_t.
\end{equation}
\end{cor}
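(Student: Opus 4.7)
The plan is to derive Corollary \ref{cor3.2} as a direct consequence of Theorem \ref{thm2} combined with the consistency of the estimated factors $\hf_t$ that is already available from the factor-model theory used in proving Theorem \ref{thm1}. The natural identity to exploit is
\begin{equation*}
\hphi'\hf_t - \bar{\bphi}'\bff_t \;=\; (\hphi - \bar{\bphi})'\hf_t \;+\; \bar{\bphi}'(\hf_t - \bff_t),
\end{equation*}
so it suffices to show each of the two terms on the right is $o_p(1)$.

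First I would handle the leading term. Theorem \ref{thm2} gives $\|\hphi - \bar{\bphi}\| = O_p(\omega_{p,T}) = o_p(1)$. Meanwhile $\hf_t$ is stochastically bounded: under Assumption \ref{a2}, $E\|\bff_t\|^2 < \infty$, and the PCA consistency established in Theorem \ref{thm1} (more precisely, in the underlying factor-model results cited there) yields $\|\hf_t - \bff_t\| = O_p(\omega_{p,T})$, hence $\|\hf_t\| = O_p(1)$. Cauchy--Schwarz then gives $|(\hphi - \bar{\bphi})'\hf_t| \le \|\hphi - \bar{\bphi}\|\,\|\hf_t\| = O_p(\omega_{p,T}) = o_p(1)$.

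Next I would bound the second term. The vector $\bar{\bphi} = \sum_{i=1}^L E[(\bphi_i'\bff_t)y_{t+1}]\,\bphi_i$ has bounded Euclidean norm: each $\bphi_i$ is a unit vector and each coefficient is bounded by Cauchy--Schwarz since $E\|\bff_t\|^2 < \infty$ (Assumption \ref{a2}) and $E(y_t^2) < \infty$ (hypothesis of Theorem \ref{thm2}). Combining $\|\bar{\bphi}\| = O(1)$ with $\|\hf_t - \bff_t\| = o_p(1)$ gives $|\bar{\bphi}'(\hf_t - \bff_t)| = o_p(1)$. Adding the two bounds yields $\hphi'\hf_t - \bar{\bphi}'\bff_t = o_p(1)$, which is the claim.

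The one subtlety I would be careful about is the sign/identification ambiguity: Assumption \ref{a1}(2) pins down $(\bF, \bB)$ only up to simultaneous column sign flips of $\bB$ and corresponding sign flips of components of $\bff_t$. The convention used in Theorem \ref{thm2} for defining $\hphi$ is the same convention used in producing $\hf_t$, so the signs of any diagonal sign matrix $\bS$ appearing in $\hf_t \approx \bS\bff_t$ also appear in $\hphi \approx \bS\bar{\bphi}$, and the inner product $\hphi'\hf_t$ is invariant under $\bS^2 = \bI_K$. This is the main place one must be careful, but it is a book-keeping issue rather than a substantive obstacle; once it is resolved, the corollary follows immediately from the decomposition above.
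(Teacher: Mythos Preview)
Your proposal is correct and is exactly the natural argument the paper has in mind: the paper does not give an explicit proof of Corollary~\ref{cor3.2}, presenting it simply as an immediate consequence of Theorem~\ref{thm2}, and your decomposition $(\hphi-\bar{\bphi})'\hf_t + \bar{\bphi}'(\hf_t-\bff_t)$ together with the pointwise factor consistency (from the Bai--Ng results invoked in Lemmas~\ref{lem2}--\ref{lem3} and the proof of Theorem~\ref{thm1}) is precisely how one fills in the details. Your remark on sign invariance is also appropriate and consistent with the identification in Assumption~\ref{a1}(2).
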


It is interesting to see that when $L=1$, $\bar{\bphi} = E((\bphi_1'\bff_t)y_{t+1})\bphi_1$. As a result, the least squares coefficient $\hphi$ in \eqref{eq3.5}, using the estimated factors as covariates,  delivers an asymptotically consistent estimate of the forecasting direction $\bphi_1$ up to a multiplicative scalar.  In this case, we can first get a scatter plot for the data $\{(\hphi'\hf_t, y_{t+1})\}_{t=1}^{T-1}$ to check if the linear fit suffices and then employ a nonparametric smoothing to get a better fit if necessary.  This can correct the bias of PCR, which delivers exactly one index of extracted factors for forecasting.

When there exist multiple sufficient predictive indices (i.e., $L\geq 2$), the above robustness no longer holds. The estimated coefficient $\hphi$ belongs to the central subspace, and it only reveals one dimension of the central subspace $S_{y|\bff}$, leading to limited predictive power. The central subspace $S_{y|\bff}$, in contrast, is entirely contained in $\bSigma_{f|y}$ as in \eqref{eq2.5}. By estimating $\bSigma_{f|y}$, the sufficient forecasting tries to recover all the effective directions and would therefore capture more driving forces to forecast.

As will be demonstrated in Section 4, the proposed method is comparable to the linear forecasting when $L=1$, but significantly outperforms the latter when $L\geq 2$ in both simulation studies and empirical applications.

\section{Numerical studies}

In this section, we conduct both Monte Carlo experiments and an empirical study to numerically assess the proposed sufficient forecasting. Sections 4.1-4.3 include three different simulation studies, and Section 4.4 shows the empirical study.

\subsection{Linear forecasting}

We first consider the case when the target is a linear function of the latent factors plus some noise. Prominent examples include asset return predictability, where we could use the cross section of book-to-market ratios to forecast aggregate market returns  \citep{campbell-1988,polk-2006,kelly-pruitt-2013}. To this end, we specify our data generating process as
\bean
& &y_{t+1} = \bphi'\bff_t + \sigma_y\epsilon_{t+1}, \\
& &x_{it}=\bb_i'\bff_t+u_{it},
\eean
where we let $K=5$ and $\bphi=(0.8,0.5,0.3,0,0)'$, namely there are 5 common factors driving the predictors and a linear combination of them predicting the outcome.  Factor loadings $\bb_i$'s are drawn from the standard normal distribution. To account for serial correlation, we set $f_{jt}$ and $u_{it}$ as two AR(1) processes respectively,
$$f_{jt}=\alpha_jf_{jt-1}+e_{jt},\quad u_{it}=\rho_i u_{it-1}+\nu_{it}.$$
We draw $\alpha_j, \rho_i$ from $\sim U[0.2,0.8]$ and fix them during simulations, while the noises $e_{jt}, \nu_{it}$ and $\epsilon_{t+1}$ are standard normal respectively. $\sigma_y$ is taken as the variance of the factors, so that the infeasible best forecast using $\bphi'\bff_t$ has an $R^2$ of approximately $50\%$.

\begin{center}
\small
\begin{threeparttable}
 \caption{Performance of forecasts using in-sample and out-sample $R^2$}
\setlength{\tabcolsep}{12pt}
 \begin{tabular}{cc |ccc|ccc}
\hline
\hline
\multicolumn{2}{c}{} &  \multicolumn{3}{c}{In-sample $R^2$ (\%)} & \multicolumn{3}{c}{Out-of-sample $R^2$ (\%)} \\
$p$ & $T$ & SF(1) & PCR & PC1 & SF(1) & PCR & PC1\\
\hline

50 & 100 & 46.9 & 47.7 &7.8 & 35.1 & 39.5 & 2.4\\
50 & 200 & 46.3 & 46.5 & 6.6 & 42.3 & 41.7 & 4.4\\
100 & 100 &49.3 &50.1 & 8.9 & 37.6 & 40.3 & 3.0\\
100 & 500 & 47.8 & 47.8 &5.5 & 43.6 & 43.5 & 1.1\\
500 & 100 & 48.5 & 48.8 & 7.9 & 40.0 & 43.1 & 4.7\\
500 & 500 & 48.2 & 48.3 & 7.2 & 48.0 & 47.9 & 6.0\\
\hline
\hline

\end{tabular}
\small
\item \textit{Notes:} In-sample and out-of-sample median $R^2$, recorded in percentage, over 1000 simulations. SF(1) denotes the sufficient forecast using one single predictive index, PCR denotes principal component regression, and PC1 uses only the first principal component.

\label{table1}
\end{threeparttable}

\end{center}

We examine both the in-sample and out-of-sample performance between the principal component regression and the proposed sufficient forecasting. Akin to the in-sample $R^2$ typically used in linear regression, the out-of-sample $R^2$, suggested by \cite{campbell-2008}, is defined here as
\begin{equation}\label{Ros}
R_{\text{OS}}^2 = 1 - \frac{\sum_{t=[T/2]}^T(y_t - \hat y_t)^2}{\sum_{t=[T/2]}^T(y_t - \bar y_t)^2},
\end{equation}
where in each replication we use the second half of the time series as the testing sample. $\hat y_t$ is the fitted value from the predictive regression using all information before $t$ and $\bar y_t$ is the mean from the test sample. Note that $R_{\text{OS}}^2$ can be negative.

Numerical results are summarized in Table \ref{table1}, where SF(1) denotes the sufficient forecast using only one single predictive index, PCR denotes the principal component regression, and PC1 uses only the first principal component.
Note that the first principal component is not necessarily the same as the sufficient regression direction $\bphi$.  Hence, PC1 can perform poorly.
As shown in Table \ref{table1}, the sufficient forecasting SF(1), by finding one single projection of the latent factors, yields comparable results as PCR, which has a total of five regressors.  The good performance of SF(1) is consistent with Theorem~\ref{thm1}, whereas the good performance of PCR is due to the fact that the true predictive model is linear (see \cite{stock-watson-2002} and Theorem~\ref{thm2}). In contrast, using the first principal component alone has very poor performance in general, as it is not the index for the regression target.

\subsection{Forecasting with factor interaction}
We next turn to the case when the interaction between factors is present. Interaction models have been employed by many researchers in both economics and statistics. For example, in an influential work, \cite{rajan-1998} examined the interaction between financial dependence and economic growth. Consider the model
$$ y_{t+1} = f_{1t}(f_{2t}+f_{3t}+1) + \epsilon_{t+1}, $$
where $\epsilon_{t+1}$ is taken to be standard normal. The data generating process for the predictors $x_{it}$'s is set to be the same as that in the previous simulation, but we let K=7, i.e., 7 factors drive the predictors with only the first 3 factors associated with the response (in a nonlinear fashion). The true sufficient directions are the vectors in the plane $S_{f|y}$ generated by $\bphi_1=(1,0,0,0,0,0,0)'$ and $\bphi_2=(0,1,1,0,0,0,0)'/\sqrt{2}$. Following the procedure in Algorithm \ref{Algo2}, we can find the estimated factors $\hf_t$ as well as the forecasting directions $\hphi_1$ and $\hphi_2$.

The forecasting model only involves a strict subset of the underlying factors. Had we made a linear forecast, all the estimated factors would likely be selected, resulting in issues such as use of irrelevant factors (i.e., only the first three factors suffice, but the first 7 factors are used). This is evident through the correlations between the response and the estimated factors, as in Table \ref{tab_corr} and results in both bias (fitting a wrong model)
and variance issue.

\begin{center}
\small
\begin{threeparttable}%
 \caption{$|\text{Corr}(y_{t+1},\hf_{it})|$}\label{tab_corr}
 \begin{tabular}{cc|ccccccc}
\hline
\hline
$p$ & $T$ & $\hf_{1t}$ &$\hf_{2t}$  & $\hf_{3t}$ & $\hf_{4t}$& $\hf_{5t}$ & $\hf_{6t}$ &$\hf_{7t}$\\
\hline
500 & 500 & 0.1819 &   0.1877  &  0.1832 &   0.1700  &  0.1737  &  0.1712   & 0.1663 \\
\hline
\hline

\end{tabular}
\small
\item \textit{Notes:} Median correlation between the forecast target and the estimated factors in absolute values, based on 1000 replications.
\end{threeparttable}

\end{center}

Next, we compare the numerical performance between the principal component regression and the proposed sufficient forecasting. As in Section 4.1, we again examine their in-sample and out-of-sample forecasting performances using the in-sample $R^2$ and the out-of-sample $R^2$ respectively. Furthermore, we will measure the distance between any estimated regression direction $\hphi$ and the central subspace $S_{f|y}$ spanned by $\bphi_1$ and $\bphi_2$. We ensure that the true factors and loadings meet the identifiability conditions by calculating an invertible matrix $\bH$ such that $\frac{1}{T}\bH\bF'\bF\bH' = \bI_K$ and $\bH^{-1}\bB'\bB\bH'^{-1}$ is diagonal. The space we are looking for is then spanned by $\bH^{-1}\bphi_1$ and $\bH^{-1}\bphi_2$. For notation simplicity, we still represent the rotated forecasting space as $S_{f|y}$. 
 We employ the squared multiple correlation coefficient $R^2(\hphi)$ to evaluate the effectiveness of $\hphi$, where
$$
R^2(\hphi) = \max_{\bphi\in S_{f|y},\|\bphi\|=1} \ {(\hphi'\bphi)^2}.
$$
This measure is commonly used in the dimension reduction literature; see \cite{li-1991}. We calculate $R^2(\hphi)$ for the sufficient forecasting directions $\hphi_{1},\hphi_2$ and the PCR direction $\hphi_{pcr}$.

\begin{center}
\small
\begin{threeparttable}%
 \caption{Performance of estimated index $\hphi$ using $R^2(\hphi)$ (\%)}\label{tab2b}
\setlength{\tabcolsep}{12pt}
 \begin{tabular}{cc|cc |c}
\hline
\hline
\multicolumn{2}{c}{}&  \multicolumn{2}{c}{Sufficient forecasting} &  \multicolumn{1}{c}{PCR} \\
$p$ & $T$ & $R^2(\hphi_1)$ & $R^2(\hphi_2)$ & \hspace*{0.3 in} $R^2(\hphi_{pcr})$ \hspace*{0.3 in}\\
\hline

100 & 100 & 84.5 (16.3) & 64.4 (25.0) & 91.4 (9.1)\\
100 & 200 & 92.9 (7.8) & 83.0 (17.5) &  95.7 (4.6)\\
100 & 500 & 96.9 (2.2) & 94.1 (5.0) & 96.7 (1.6)\\
500 & 100 & 85.4 (15.9) & 57.9 (24.8) & 91.9 (8.8)\\
500 & 200 & 92.9 (6.7) & 82.9 (16.1) & 95.9 (4.1)\\
500 & 500 & 97.0 (2.0) & 94.5 (4.5) & 98.2 (1.5)\\
\hline
\hline

\end{tabular}
\small
\item \textit{Notes:} Median squared multiple correlation coefficients in percentage over 1000 replications. The values in parentheses are the associated standard deviations. PCR denotes principal component regression.
\end{threeparttable}

\end{center}

\begin{center}
\small
\begin{threeparttable}%
 \caption{Performance of forecasts using in-sample and out-sample $R^2$}\label{tab2}
\setlength{\tabcolsep}{12pt}
 \begin{tabular}{cc |ccc|ccc}
\hline
\hline
\multicolumn{2}{c}{} &  \multicolumn{3}{c}{In-sample $R^2$ (\%)} & \multicolumn{3}{c}{Out-of-sample $R^2$ (\%)} \\
$p$ & $T$  & SFi & PCR & PCRi & SFi & PCR & PCRi\\
\hline

100 & 100  &46.2 & 38.5 & 42.4 & 20.8 & 12.7 & 13.5\\
100 & 200  & 57.7 & 35.1 &  38.6 & 41.6 & 24.0 & 24.7\\
100 & 500  & 77.0 & 31.9 & 34.9 & 69.7 & 29.1 & 31.5\\
500 & 100  & 49.5 & 38.7 & 22.7 & 21.9 & 16.6 & 19.2\\
500 & 200  & 58.9 & 34.7 & 39.0 & 40.2 & 22.2 & 24.0\\
500 & 500  & 79.8 & 32.5 & 35.6 & 72.3 & 26.9 & 28.2\\
\hline
\hline

\end{tabular}
\small
\item \textit{Notes:} In-sample and out-of-sample median $R^2$ in percentage over 1000 replications. SFi uses first two predictive indices and includes their interaction effect; PCR uses all principal components; PCRi extends PCR by including an extra interaction term built on the first two principal components of the covariance matrix of predictors.
\end{threeparttable}

\end{center}

In the sequel, we draw two conclusions from the simulation results summarized in Tables  \ref{tab2b} and \ref{tab2} based on 1000 independent replications.

Firstly, we observe from Table \ref{tab2b} that the length of time-series  $T$ has an important effect on $R^2(\hphi_1)$, $R^2(\hphi_2)$ and $R^2(\hphi_{pcr})$. As $T$ increases from 100 to 500, all the squared multiple correlation coefficients increase too.
In general, the PCR and the proposed sufficient forecasting perform very well in terms of the measure $R^2(\hphi)$. The good performance of $R^2(\hphi_{pcr})$ is a testimony of the correctness of Theorem~\ref{thm2}. However, the PCR is limited to the linear forecasting function, and it only picks up one effective sufficient direction. The sufficient forecasting successfully picks up both effective sufficient directions in the forecasting equation, which is consistent to Theorem~\ref{thm1}.

Secondly, we evaluate the power of sufficient forecasting using the two predictive indices $\hphi_1'\hf_t$ and $\hphi_2'\hf_t$. Similar to the previous section, we build a simple linear regression to forecast, but include three terms, $\hphi_1'\hf_1$, $\hphi_2'\hf_2$ and $(\hphi_1'\hf_1)\cdot( \hphi_2'\hf_2)$,  as regressors to account for interaction effects. In-sample and out-of-sample $R^2$ are reported. For comparison purposes, we also report results from principal component regression (PCR), which uses the 7 estimated factors extracted from predictors. In addition, we add the interaction between the first two principal components on top of the PCR, and denote this method as PCRi. Note that the principal component directions differ from the sufficient forecast directions, and it does not take into account the target information.  Therefore, PCRi model is very different from the sufficient regression with interaction terms.  As shown in Table \ref{tab2}, the in-sample $R^2$'s of PCR hover around 35\%, and its out-of-sample $R^2$'s are relatively low.  Including interaction between the first two PCs of the covariance of the predictors does not help much, as the interaction term is incorrectly specified for this model. SFi picks up the correct form of interaction and exhibit better performance, especially when $T$ gets reasonably large.

\subsection{Forecasting using semiparametric factor models}
When the factor loadings admit a semiparametric structure, this additional information often leads to improved estimation accuracy. In this section, we consider a simple design with one observed covariate and three latent factors. We set the three loading functions as $g_1(z)=z$, $g_2(z)=z^2-1$ and $g_3(z) = z^3 -2z$, where the characteristic $z$ is drawn from standard normal. We examine the same predictive model as outlined in Section 4.2, which involves factor interaction. The only difference now is that our loadings are covariate-dependent and we no longer have irrelevant factors.

\begin{figure}[!htbp]
\begin{center}
\includegraphics[scale=0.63]{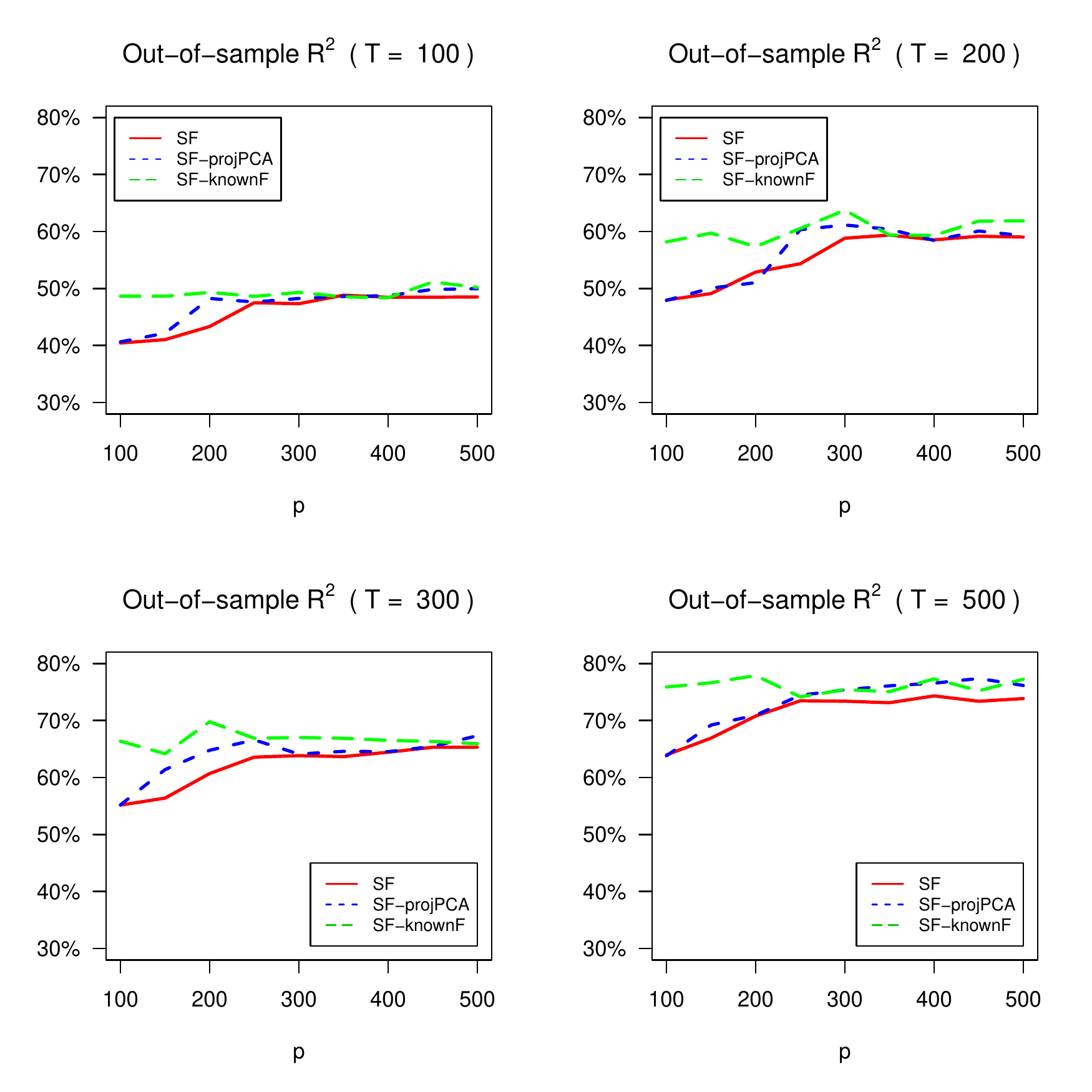}
\caption[Forecasting results for CCINRV (consumer credit outstanding)]{\small Out-of-sample $R^2$ over 1000 replications. SF denotes the sufficient forecasting method without using extra covariate information. SF-projPCA uses projected-PCA method to estimate the latent factors, and SF-knownF uses the true factors from the data generating process. All the methods are based on 2 predictive indices. }
\end{center}
\label{fig2}
\end{figure}

Figure 2 presents the out-of-sample performance of three methods for different combinations of $(p,T)$. The three methods differ in their ways of estimating the latent factors. In the first method, we simply use traditional PCA to extract underlying factors, whereas in the second method we resort to projected-PCA. For comparison, we also look at the performance when the true factors are used to forecast the target. In general, as $T$ increases, the predictive power significantly increases, irrespective of the cross-sectional dimension $p$. For fixed $T$, the projected-PCA quickly picks up predictive power as $p$ increases. Using extra covariate information helps up gain accuracy in estimating the underlying factors, and lifts up the out-of-sample $R^2$ very close to those obtained from using the true factors. This further demonstrates that with the presence of known characteristics in factor loadings, the projected-PCA would buy us more power in forecasting the factor-driven target.

\subsection{An empirical example}

As an empirical investigation, we use sufficient forecasting to forecast several macroeconomic variables. Our dataset is taken from \cite{stock-watson-2012}, which consists of quarterly observations on 108 U.S. low-level disaggregated macroeconomic time series from 1959:I to 2008:IV. Similar datasets have been extensively studied in the forecasting literature \citep{bai-2008, Ludvigson-Ng-2009}. These time series are transformed by taking logarithms and/or differencing to make them approximately stationary. We treat each of them as a forecast target $y_{t}$, with all the others forming the predictor set $\bx_t$. The out-of-sample performance of each time series is examined, measured by the out-of-sample $R^2$ in \eqref{Ros}. The procedure involves fully recursive factor estimation and parameter estimation starting half-way of the sample, using data from the beginning of the sample through quarter $t$ for forecasting in quarter $ t + 1$.

We present the forecasting results for a few representatives in each macroeconomic category. The time series are chosen such that the second eigenvalue of $\hbSig_{f|y}$ exceeds $50\%$ of the first eigenvalue in the first half training sample. This implies that a single predictive index may not be enough to forecast the given target, so we could consider the effect of second predictive index. Note that in some categories, the second eigenvalues $\hbSig_{f|y}$ for all the time series are very small; in this case, we randomly pick a representative.  As described in previous simulations, we use SF(1) to denote sufficient forecasting with one single predictive index, which corresponds to a linear forecasting equation with one regressor. SF(2) employs an additional predictive index on top of SF(1), and is fit by local linear regression. In all cases, we use 7 estimated factors extracted from 107 predictors, which are found to explain over 40 percent of the variation in the data; see \cite{bai-ng-2013}.

\begin{center}
\small
\begin{threeparttable}
 \caption{Out-of-sample Macroeconomic Forecasting}
\setlength{\tabcolsep}{11pt}
\begin{tabular}{llcc cc}
\hline
\hline
Category & Label & SF(1) & SF(2) & PCR & PC1 \\
\hline

GDP components & GDP264 & 11.2 & 14.9 & 13.8 & 9.2 \\
IP & IPS13 & 17.8 & 21.0 & 20.6& -2.9 \\
Employment & CES048  & 24.1 & 26.4 & 23.4 & 25.7 \\
Unemployment rate & LHU680 & 19.0 & 21.7 & 13.6 & 21.1 \\
Housing & HSSOU & 1.9 & 4.4 &-0.8 & 7.4\\
Inventories & PMNO & 19.8 & 20.2 & 18.6 &6.3 \\
Prices & GDP275\_3 & 9.1 & 11.3 & 10.2 & 0.2 \\
Wages & LBMNU & 32.7 & 34.2 &34.0 & 29.1 \\
Interest rates & FYFF & 3.6 & 2.4 & -31.5 & 1.6 \\
Money & CCINRV &  4.8 & 16.0 & 1.3 & -0.5 \\
Exchange rates & EXRCAN & -4.7 & 4.5 & -7.9 & -1.3 \\
Stock prices & FSDJ & -9.2 & 8.8 & -14.5& -7.7 \\
Consumer expectations & HHSNTN &-3.7 & -5.0 & -4.4 & 0.0 \\
\hline
\hline

\end{tabular}
\small
\item \textit{Notes:} Out-of-sample $R^2$ for one--quarter ahead forecasts. SF(1) uses a single predictive index built on 7 estimated factors to make a linear forecast. SF(2) is fit by local linear regression using the first two predictive indices. PCR uses all 7 principal components as linear regressors. PC1 uses the first principal component.
\label{table3}
\end{threeparttable}

\end{center}


Table \ref{table3} shows that SF(1) yields comparable performance as PCR. There are cases where SF(1) exhibits more predictability than PCR, e.g., Inventories and Interest rates. This is due to the fact that there are possible nonlinear effects from extracted factors on the target.  In this case, the first predictive index obtained from our procedure accounts for such a nonlinear effect whereas the index created by PCR finds only the best linear approximation and can not find a second index.
SF(2) improves predictability in many cases, where the target time series might not be linear in the latent factors. Taking CCINRV (consumer credit outstanding) for example, Figure 3 plots the eigenvalues of its corresponding $\hbSig_{f|y}$, the estimated regression surface and the running out-of-sample $R^2$'s by different sample split date. As can been seen from the plot, there is a non-linear effect of the two underlying macro factors on the target. By taking such effect into account, SF(2) consistently outperforms the other methods.

\begin{figure}[!htbp]
\begin{center}
\includegraphics[scale=0.8]{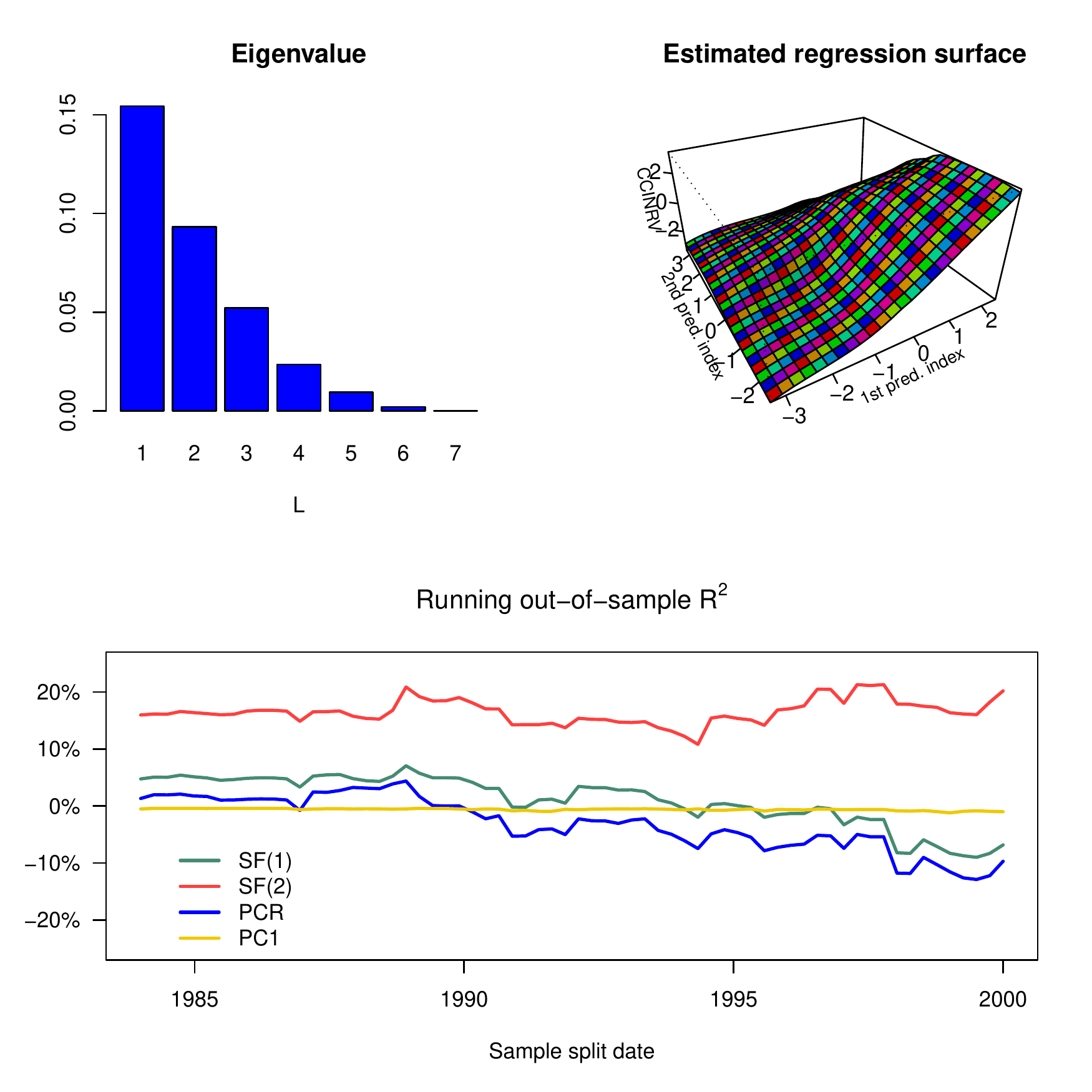}
\caption[Forecasting results for CCINRV (consumer credit outstanding)]{\small Forecasting results for CCINRV (consumer credit outstanding). The top left panel shows the eigenvalues of $\hbSig_{f|y}$. The top right panel gives a 3-d plot of the estimated regression surface. The lower panel displays the running out-of-sample $R^2$'s for four methods described in Table \ref{table3}. }
\end{center}
\label{fig3}
\end{figure}

\section{Discussion}

We have introduced the sufficient forecasting in a high-dimensional environment to forecast a single time series, which enlarges the scope of traditional factor forecasting. The key feature of the sufficient forecasting is its ability in extracting multiple predictive indices and providing additional predictive power beyond the simple linear forecasting, especially when the target is an unknown nonlinear function of underlying factors. {We explicitly point out the interesting connection between the proposed sufficient forecasting and the deep learning \citep{bengio-2009,bengio-etal-2013}.}
Furthermore, the proposed method extends the sufficient dimension reduction to high-dimensional regimes by condensing cross-sectional information through (semiparametric) factor models. We have demonstrated its efficacy through Monte Carlo experiments. Our empirical results on macroeconomic forecasting suggest that such procedure can contribute to substantial improvement beyond conventional linear models.

\section*{Acknowledgement}

The authors are grateful to the co-editor and referees for their constructive comments. The authors also thank Stephane Bonhomme, A. Ronald Gallant, Yuan Liao, Rosa Matzkin, Jose Luis Montiel Olea, and participants at Penn State University (Econometrics), New York University (Stern), Peking University (Statistics), University of South Carolina (Statistics), and The Institute for Fiscal Studies for their helpful suggestions. This paper is supported by National Institutes of Health grants R01-GM072611 and R01GM100474-04 and National Science Foundation grants DMS-1206464, DMS-1308566, and DMS-1505256.

\small
\singlespacing
\section{Appendix}

We first cite two lemmas from \cite{fan-2013}, which are needed subsequently in the proofs.
\begin{lem} \label{lem1}
Suppose $\bA$ and $\bB$ are two semi-positive definite matrices, and that $\lambda_{\min}(\bA)>c_{p,T}$ for some sequence $c_{p,T}>0$. If $\|\bA-\bB\|=o_p(c_{p,T})$, then
$$\|\bA^{-1}-\bB^{-1}\|=O_p(c_{p,T}^{-2})\|\bA-\bB\|.$$
\end{lem}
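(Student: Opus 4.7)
The plan is to combine a classical matrix identity for the difference of inverses with a perturbation argument on the smallest eigenvalue. I would write $\bA^{-1}-\bB^{-1}=\bA^{-1}(\bB-\bA)\bB^{-1}$, which is valid whenever both matrices are invertible, and then bound the two factors $\|\bA^{-1}\|$ and $\|\bB^{-1}\|$ separately using the spectral-norm bound $\|\bM^{-1}\|=1/\lambda_{\min}(\bM)$ that holds for any positive definite symmetric $\bM$.

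First I would handle $\bA$: the hypothesis $\lambda_{\min}(\bA)>c_{p,T}$ immediately gives $\|\bA^{-1}\|\le c_{p,T}^{-1}$. For $\bB$, I would invoke Weyl's inequality, which for symmetric matrices asserts $|\lambda_{\min}(\bB)-\lambda_{\min}(\bA)|\le \|\bA-\bB\|$. Since $\|\bA-\bB\|=o_p(c_{p,T})$, with probability tending to one we have $\lambda_{\min}(\bB)\ge \lambda_{\min}(\bA)-\|\bA-\bB\|\ge c_{p,T}/2$, so $\bB$ is invertible on this event and $\|\bB^{-1}\|\le 2\,c_{p,T}^{-1}$. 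Combining the identity with submultiplicativity of the spectral norm yields
\begin{equation*}
\|\bA^{-1}-\bB^{-1}\|\le \|\bA^{-1}\|\,\|\bA-\bB\|\,\|\bB^{-1}\|\le 2\,c_{p,T}^{-2}\,\|\bA-\bB\|,
\end{equation*}
which is exactly the claim.

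The only subtlety is that the identity $\bA^{-1}-\bB^{-1}=\bA^{-1}(\bB-\bA)\bB^{-1}$ requires $\bB$ to be invertible, and this is an event of probability tending to one rather than a deterministic guarantee; the conclusion must therefore be phrased in the stochastic $O_p$ sense, which matches the statement of the lemma. I do not anticipate any substantive technical obstacle beyond this routine bookkeeping, since both Weyl's inequality and the inverse-difference identity are elementary. If one wanted a fully deterministic version, one would simply add the hypothesis $\|\bA-\bB\|\le\lambda_{\min}(\bA)/2$, reaching the same bound without the $o_p$ qualifier.
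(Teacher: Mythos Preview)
Your argument is correct and is the standard route: the resolvent identity $\bA^{-1}-\bB^{-1}=\bA^{-1}(\bB-\bA)\bB^{-1}$, combined with Weyl's inequality to transfer the lower bound on $\lambda_{\min}(\bA)$ to $\lambda_{\min}(\bB)$ on an event of probability tending to one, gives the stated $O_p(c_{p,T}^{-2})$ bound immediately. The paper does not actually prove this lemma---it is cited without proof from \cite{fan-2013}---so there is no ``paper's own proof'' to compare against, but your write-up is exactly the elementary argument one would expect and matches the proof given in that reference.
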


\begin{lem} \label{lem1b}
 Let $\{\lambda_i\}_{i=1}^p$ be the eigenvalues of $\bSigma$ in descending order and $\{\bxi\}_{i=1}^p$ be their associated eigenvectors. Correspondingly, let $\{\hlam_i\}_{i=1}^p$ be the eigenvalues of $\hSig$ in descending order and $\{\hxi\}_{i=1}^p$ be their associated eigenvectors. Then,
 \begin{itemize}
   \item [(a)] Weyl's Theorem: $$|\hlam_i - \lambda_i|\leq \|\hSig-\bSigma\|.$$
   \item [(b)] $\sin(\theta)$ Theorem \citep{davis-kahan-1970}:
$$\|\hxi_i-\bxi_i\|\leq\frac{\sqrt{2}\cdot \|\hSig-\bSigma\|}{\min(|\hlam_{i-1}-\lambda_i|,|\lambda_i-\hlam_{i+1}|)}.$$
 \end{itemize}
\end{lem}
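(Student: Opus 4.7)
The plan is to prove both inequalities using only the variational (min–max) characterization of eigenvalues of symmetric matrices; no spectral calculus is required. I would treat (a) first, since the ordering it provides is needed to identify the gap in (b).

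\textbf{Part (a).} I would invoke the Courant–Fischer theorem, which states that for any symmetric $p\times p$ matrix $\bA$,
$$\lambda_i(\bA)=\max_{\dim(V)=i}\min_{\bx\in V,\,\|\bx\|=1}\bx^\top\bA\bx.$$
Writing $\hSig=\bSigma+(\hSig-\bSigma)$ and using $|\bx^\top(\hSig-\bSigma)\bx|\le\|\hSig-\bSigma\|$ for every unit vector $\bx$, passing the inequality through the $\min$ and then the $\max$ gives $\lambda_i(\hSig)\ge\lambda_i(\bSigma)-\|\hSig-\bSigma\|$. Interchanging the roles of $\bSigma$ and $\hSig$ yields the reverse bound, and combining the two establishes Weyl's inequality.

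\textbf{Part (b).} I would proceed by an eigen-expansion argument. Fix $i$ and choose the sign of $\hxi_i$ so that $c_i:=\langle\hxi_i,\bxi_i\rangle\ge 0$. Since $\{\hxi_j\}_{j=1}^p$ is an orthonormal basis, expand $\bxi_i=\sum_{j=1}^p c_j\hxi_j$ with $c_j=\langle\hxi_j,\bxi_i\rangle$. Starting from $\bSigma\bxi_i=\lambda_i\bxi_i$, I would rearrange as $(\hSig-\lambda_i\bI)\bxi_i=(\hSig-\bSigma)\bxi_i$. Evaluating the left side in the $\{\hxi_j\}$ basis gives $\sum_j c_j(\hlam_j-\lambda_i)\hxi_j$, so by Parseval,
$$\sum_{j=1}^p c_j^2(\hlam_j-\lambda_i)^2=\|(\hSig-\bSigma)\bxi_i\|^2\le\|\hSig-\bSigma\|^2.$$
Dropping the $j=i$ term and bounding $(\hlam_j-\lambda_i)^2\ge\delta^2$ for $j\ne i$, with $\delta:=\min_{j\ne i}|\hlam_j-\lambda_i|$, I obtain $1-c_i^2=\sum_{j\ne i}c_j^2\le\|\hSig-\bSigma\|^2/\delta^2$. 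To convert this into the stated norm bound, I would use $c_i\in[0,1]$ to write $\|\hxi_i-\bxi_i\|^2=2(1-c_i)\le 2(1-c_i^2)$, hence
$$\|\hxi_i-\bxi_i\|\le\sqrt{2}\,\|\hSig-\bSigma\|/\delta.$$
It remains to identify $\delta$ with $\min(|\hlam_{i-1}-\lambda_i|,|\lambda_i-\hlam_{i+1}|)$. Since the $\hlam_j$ are arranged in descending order and, by part (a), $\hlam_j$ is within $\|\hSig-\bSigma\|$ of $\lambda_j$, whenever the perturbation is smaller than half the true spectral gap at index $i$ one has $\hlam_{i-1}\ge\lambda_i\ge\hlam_{i+1}$ and every $\hlam_j$ with $j\notin\{i-1,i,i+1\}$ is strictly farther from $\lambda_i$ than at least one of $\hlam_{i\pm 1}$; the minimum is therefore attained on the two adjacent indices, matching the denominator in the statement.

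\textbf{Main obstacle.} The proof of (a) is routine. The subtle point in (b) is exactly this identification of the gap with the two neighboring terms: a priori $\min_{j\ne i}|\hlam_j-\lambda_i|$ could be achieved by some $j$ far from $i$ if the spectrum is clustered elsewhere. The Weyl bound from part (a), combined with the spectral-gap hypothesis at index $i$ that is standing wherever this lemma is used (e.g., the \emph{distinct positive eigenvalues} assumption in Theorem~\ref{thm1}), is precisely what rules this out and yields the stated two-term denominator.
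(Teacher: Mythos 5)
The paper offers no proof of this lemma at all: it is cited wholesale from Fan, Liao and Mincheva (2013), with part (b) attributed to the Davis--Kahan $\sin(\theta)$ theorem, so your self-contained argument is necessarily a different route from the paper's, and it is a correct one. Part (a) via Courant--Fischer is the standard proof of Weyl's inequality. For part (b), expanding $\bxi_i$ in the perturbed eigenbasis and applying Parseval to $(\hSig-\lambda_i\bI)\bxi_i=(\hSig-\bSigma)\bxi_i$ is a clean elementary derivation that avoids the subspace/resolvent machinery of the original Davis--Kahan argument; it delivers the bound with denominator $\delta=\min_{j\ne i}|\hlam_j-\lambda_i|$, and you correctly isolate the only delicate step, namely replacing $\delta$ by $\min(|\hlam_{i-1}-\lambda_i|,|\lambda_i-\hlam_{i+1}|)$. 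Your resolution via the bracketing $\hlam_{i+1}\le\lambda_i\le\hlam_{i-1}$ is exactly what is needed in the regime where the lemma is invoked (Theorem~\ref{thm1} assumes distinct positive eigenvalues and has $\|\hbSig_{f|y}-\bSigma_{f|y}\|=O_p(\omega_{p,T})\to 0$), but you can in fact close the case split unconditionally and drop the appeal to a standing gap hypothesis: if the bracketing fails, say $\lambda_i>\hlam_{i-1}$, then part (a) applied at index $i-1$ gives $\|\hSig-\bSigma\|\ge\lambda_{i-1}-\hlam_{i-1}\ge\lambda_i-\hlam_{i-1}=|\hlam_{i-1}-\lambda_i|$, so the right-hand side of (b) is at least $\sqrt{2}$, while the left-hand side is at most $\sqrt{2}$ under your sign normalization $\langle\hxi_i,\bxi_i\rangle\ge 0$ (and symmetrically if $\lambda_i<\hlam_{i+1}$). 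With that one observation your argument proves the lemma exactly as stated.
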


\subsection{Proof of Proposition \ref{prop2.1}}

\begin{proof}
It suffices to show that $\hf_t = \hLam_b\bx_t$, or equivalently, $\hF'=\hLam_b\bX$ in matrix form. First we let $\bM=\diag(\lambda_1,\cdots,\lambda_K)$, where $\lambda_i$ are the largest $K$ eigenvalues of $\bX'\bX$. By construction, we have $(\bX'\bX)\hF = \hF \bM$.
Thus,
$$
    \hB'\hB = T^{-2}\hF'(\bX'\bX)\hF = T^{-2}\hF'\hF \bM=T^{-1}\bM.
$$
Now, it is easy to see that
$$
(\hB'\hB)^{-1}\hB'\bX  =
T \bM^{-1} (T^{-1}\hF'\bX')\bX = \bM^{-1} (\bX'\bX \hF)' = \hF'.$$
This concludes the proof of Proposition \ref{prop2.1}.

\end{proof}

\subsection{Proof of Theorem \ref{thm1}}
Let $\bV$ denote the $K\times K$ diagonal matrix consisting of the $K$ largest eigenvalues of the sample covariance matrix $T^{-1}\bX'\bX$ in the descending order. Define a $K\times K$ matrix
\begin{equation}\label{def_H}
\bH=(1/T)\bV^{-1}\hF'\bF\bB'\bB,
\end{equation}
where $\bF'=(\bff_1,\cdots,\bff_T)$. 
In factor models, the rotation matrix $\bH$ is typically used for identifiability. With identification in Assumptions \ref{a1} (2), \cite{bai-ng-2013} showed that $\bH$ is asymptotically an identity matrix. The result continues to hold under our setting.

\begin{lem} \label{lem2}
Under Assumptions \ref{a1} (1)--(2), \ref{a2} and \ref{a3}, we have
$$||\bH-\bI_K||=O_p(\omega^2_{p,T})$$
\end{lem}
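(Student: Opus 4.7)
The plan is to exploit the eigenvalue equation $(\bX'\bX/T)\hF = \hF\bV$, which holds by construction of $\hF$. Substituting $\bX = \bB\bF' + \bU$, premultiplying by $T^{-1}\hF'$, and using the normalization $T^{-1}\hF'\hF = \bI_K$ yields
$$
\bD \bB'\bB \bD' + \bE = \bV,
$$
where $\bD = T^{-1}\hF'\bF$ and $\bE$ collects the four cross-terms that involve the idiosyncratic error matrix $\bU$. Under Assumption~\ref{a3}, standard bounds (as used in Lemma~\ref{lem1} and the proofs in \cite{bai-ng-2013}) show that $\|\bE\| = O_p(p\,\omega^{2}_{p,T})$. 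Dividing by $p$ and letting $\tilde\bV = \bV/p$, $\tilde\bSigma_B = \bB'\bB/p$, this becomes $\bD \tilde\bSigma_B \bD' = \tilde\bV + O_p(\omega^{2}_{p,T})$.

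Next, I would unpack the definition $\bH = \tilde\bV^{-1}\bD\tilde\bSigma_B$. Multiplying the previous display on the right by $\bD'$ after rearranging gives $\tilde\bV\bH\bD' = \bD\tilde\bSigma_B\bD' = \tilde\bV + O_p(\omega_{p,T}^2)$, and by Assumption~\ref{a1}(1), $\tilde\bV^{-1} = O_p(1)$, so
$$
\bH \bD' = \bI_K + O_p(\omega^{2}_{p,T}).
$$
A parallel computation using $T^{-1}\hF'\hF = \bI_K$, combined with the decomposition $\hF = \bF\bH' + \bR$ from the standard factor-model expansion (where $\|\bR\|_F^2/T = O_p(\omega^{2}_{p,T})$), gives $\bD = T^{-1}\hF'\bF = \bH + T^{-1}\bR'\bF$. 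The key refinement is showing that the cross term $T^{-1}\bR'\bF$ is $O_p(\omega^{2}_{p,T})$ rather than merely $O_p(\omega_{p,T})$; this is achieved by substituting the explicit expansion of $\bR$ from the eigenvalue equation and invoking the mutual independence of $\{\bff_t\}$ and $\{\bu_t\}$ in Assumption~\ref{a2}, which forces terms like $T^{-1}\bF'\bU$ to concentrate at the faster rate. Combining these two approximations yields $\bH\bH' = \bI_K + O_p(\omega^{2}_{p,T})$.

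Finally, I would invoke the PC1 identification in Assumption~\ref{a1}(2). Since $\tilde\bSigma_B$ is diagonal with distinct positive entries and by Weyl's inequality (Lemma~\ref{lem1b}(a)) the diagonal matrix $\tilde\bV$ converges entry-wise to $\tilde\bSigma_B$, the relation $\bH\tilde\bSigma_B\bH' = \tilde\bV + O_p(\omega^{2}_{p,T})$ together with $\bH\bH' = \bI_K + O_p(\omega^{2}_{p,T})$ pins $\bH$ down: the only orthogonal matrix that simultaneously diagonalizes a diagonal matrix with distinct entries and preserves the ordering of eigenvalues is a signed identity. Fixing signs conventionally (e.g., as in \cite{bai-ng-2013}), this yields $\|\bH - \bI_K\| = O_p(\omega^{2}_{p,T})$.

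The main obstacle is the second-order refinement: establishing $T^{-1}\bR'\bF = O_p(\omega^{2}_{p,T})$ rather than $O_p(\omega_{p,T})$, and controlling each of the four cross-terms in $\bE$ at the $p\,\omega^{2}_{p,T}$ level. Both rely on explicitly expanding $\bR$ via the eigenvalue equation so that each resulting summand factors into a product of a $p^{-1/2}$-level average (from averaging idiosyncratic components across $i$) and a $T^{-1/2}$-level average (from averaging across $t$), which multiplies to $\omega^{2}_{p,T}$ once the two sources are combined through Cauchy--Schwarz and the mixing condition in Assumption~\ref{a2}.
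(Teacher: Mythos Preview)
Your outline is essentially correct and recreates, step by step, the argument that \cite{bai-ng-2013} themselves carry out in their Appendix B. However, it takes a very different route from the paper's own proof. The paper does not reprove the result; instead it observes that Assumptions~\ref{a1}(1)--(2), \ref{a2} and \ref{a3} already imply all of the assumptions in \cite{bai-ng-2013} except their Assumption~A(c.ii), and then spends the entire proof verifying that one condition: the serial summability $\sum_{t}|E(u_{it}u_{js})|\le M_1$ is obtained from the strong-mixing coefficient via Davydov's inequality (using the eighth-moment bound in Assumption~\ref{a3}(1)), and the cross-sectional summability $\sum_i |E(u_{it}u_{js})|\le M_2$ follows from $\|\bSigma_u\|_1\le M$. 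Once A(c.ii) holds, the rate $\|\bH-\bI_K\|=O_p(\omega_{p,T}^2)$ is simply quoted from \cite{bai-ng-2013}.

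Your approach buys self-containment at the price of length: you must actually establish $\|\bE\|=O_p(p\,\omega_{p,T}^2)$ and the refinement $T^{-1}\bR'\bF=O_p(\omega_{p,T}^2)$, both of which are the technical core of Bai--Ng's Appendix B. You identify these as the main obstacles and give the right strategy (expand $\bR$ explicitly from the eigenvalue equation so that each term factors into a cross-sectional average and a time-series average), but note that you still invoke ``standard bounds\ldots in \cite{bai-ng-2013}'' for $\bE$---at that point it is more economical to cite their final result directly, which is exactly what the paper does. One small caution in your final step: to pass from $\bH\tilde\bSigma_B\bH'=\tilde\bV+O_p(\omega_{p,T}^2)$ and $\bH\bH'=\bI_K+O_p(\omega_{p,T}^2)$ to $\|\bH-\bI_K\|=O_p(\omega_{p,T}^2)$ you also need $\|\tilde\bV-\tilde\bSigma_B\|=O_p(\omega_{p,T}^2)$ at the same rate, not merely convergence; this follows from the first display once $\|\bD-\bI_K\|=O_p(\omega_{p,T}^2)$ is known, so the argument is slightly circular unless you organize it as a simultaneous perturbation bound (which is again how Bai--Ng proceed).
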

\begin{proof}
In view of Assumption A and Appendix B of \cite{bai-ng-2013}, we only need to verify that Assumption A (c.ii) in \cite{bai-ng-2013} is satisfied under our models and assumptions, since all other assumptions of theirs are met here. To verity Assumption A (c.ii), it is enough to show that $\sum_{t=1}^T|E(u_{it}u_{js})|\leq M_1$ and $\sum_{i=1}^p|E(u_{it}u_{js})|\leq M_2$ for some positive constants $M_1,M_2<\infty$.

Consider $E(u_{it}u_{js})$. Since $E|u_{it}|^8\leq M$ for some constant $M$ and any $t$ by Assumption \ref{a3}(1), by using Davydov's inequality (Corollary 16.2.4 in \cite{Athreya-2006}), there exist constants $C_1$ and $C_2$ such that for all $(i,j)$,
$$|E(u_{it}u_{js})|\leq C_1\left(\alpha\left(|t-s|\right)\right)^{1-\frac{1}8-\frac{1}8}\leq C_2\rho^{\frac{3}4|t-s|},$$
where $\alpha(\cdot)$ is the mixing coefficient, $\rho\in(0,1)$, and $\alpha\left(|t-s|\right)<c\rho^{|t-s|}$ by Assumption \ref{a2}. Hence, with $M_1= 2C_2/(1-\rho^{\frac{3}4})$, we have
$$
\sum_{t=1}^T|E(u_{it}u_{js})|=\sum_{t=1}^T C_2\rho^{\frac{3}4|t-s|}\leq 2C_2/(1-\rho^{\frac{3}4})= M_1.
$$

In addition, it is easy to verify that there is a constant $C_3$ such that $$|E(u_{it}u_{js})|\leq C_3|E(u_{it}u_{jt})| = C_3|\bSigma_u(i,j)|.$$ By assumption, we also have $$\sum_{i=1}^p|E(u_{it}u_{js})|\le C_3\sum_{i=1}^p|\bSigma_u(i,j)| \leq C_3\|\bSigma_u\|_1\leq C_3M.$$ Thus, Assumption A (c.ii) in \cite{bai-ng-2013} is satisfied with $M_2=C_3M$. Therefore, with identification in Assumption \ref{a1}(2), we have
$$
    ||\bH-\bI_K|| = O_p(\omega^2_{p,T}),
$$
which completes the proof of Lemma \ref{lem2}.

\end{proof}

The next lemma shows that the normalization matrix $\bLam_b$ can be consistently estimated by the plug-in estimate $\hLam_b$ under the spectral norm, which is an important result to prove Theorem \ref{thm1}.

\begin{lem} \label{lem3}
Under Assumptions \ref{a1} (1)--(2), \ref{a2} and \ref{a3}, we have
\begin{itemize}
  \item [(a)]$\|\hB-\bB\|=O_p(p^{1/2}\omega_{p,T}),$
  \item [(b)]$\|\hLam_b-\bLam_b\|=O_p(p^{-1/2}\omega_{p,T}).$
\end{itemize}
\end{lem}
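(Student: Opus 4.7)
The plan is to derive (a) using the explicit PCA expression $\hB=T^{-1}\bX\hF$ together with the rotation matrix $\bH$ from \eqref{def_H} controlled in Lemma~\ref{lem2}, and then deduce (b) from (a) by matrix perturbation via Lemma~\ref{lem1}.

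For (a), I would decompose
$$
\|\hB-\bB\|\ \le\ \|\hB-\bB\bR\|\ +\ \|\bB\|\cdot\|\bR-\bI_K\|,
$$
where $\bR$ is the natural rotation matrix (essentially $(\bH^{-1})'$) under which the standard factor-model literature aligns $\hB$ with $\bB$. Assumption~\ref{a1}(1) gives $\|\bB\|\le \sqrt{c_2\,p}=O(p^{1/2})$, and Lemma~\ref{lem2} gives $\|\bR-\bI_K\|=O_p(\omega_{p,T}^2)$ (since $\bH\to\bI_K$ forces $\bH^{-1}$, $\bH'$, and $(\bH^{-1})'$ all to $\bI_K$ at the same rate). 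Hence the second term is $O_p(p^{1/2}\omega_{p,T}^2)\subseteq O_p(p^{1/2}\omega_{p,T})$. For the first term, substituting $\bX=\bB\bF'+\bU$ into $\hB=T^{-1}\bX\hF$ gives
$$
\hB-\bB\bR\ =\ \bB\bigl(T^{-1}\bF'\hF-\bR\bigr)\ +\ T^{-1}\bU\hF,
$$
and each piece can be controlled by the row-wise bounds of the Bai--Ng type, whose underlying assumptions have already been verified in the proof of Lemma~\ref{lem2}. Summing squared row-wise errors of order $\omega_{p,T}^2$ over the $p$ rows yields $\|\hB-\bB\bR\|_F^2=O_p(p\,\omega_{p,T}^2)$, whence, using $\|\cdot\|\le\|\cdot\|_F$, we obtain $\|\hB-\bB\bR\|=O_p(p^{1/2}\omega_{p,T})$, completing (a).

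For (b), write
$$
\hLam_b-\bLam_b\ =\ (\hB'\hB)^{-1}(\hB-\bB)'\ +\ \bigl[(\hB'\hB)^{-1}-(\bB'\bB)^{-1}\bigr]\bB'.
$$
Since $\|\hB\|\le\|\bB\|+\|\hB-\bB\|=O_p(p^{1/2})$ and $\|\bB\|=O(p^{1/2})$, part~(a) gives $\|\hB'\hB-\bB'\bB\|\le\|\hB\|\|\hB-\bB\|+\|\hB-\bB\|\|\bB\|=O_p(p\,\omega_{p,T})$. Combined with the pervasive condition $\lambda_{\min}(\bB'\bB)\ge c_1 p$, Weyl's theorem (Lemma~\ref{lem1b}(a)) yields $\lambda_{\min}(\hB'\hB)\ge c_1 p-O_p(p\,\omega_{p,T})$, so $\|(\hB'\hB)^{-1}\|=O_p(p^{-1})$ with probability tending to one. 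The first term is therefore at most $O_p(p^{-1})\cdot O_p(p^{1/2}\omega_{p,T})=O_p(p^{-1/2}\omega_{p,T})$. For the second, invoke Lemma~\ref{lem1} with $c_{p,T}\asymp p$ to obtain $\|(\hB'\hB)^{-1}-(\bB'\bB)^{-1}\|=O_p(p^{-2})\cdot O_p(p\,\omega_{p,T})=O_p(p^{-1}\omega_{p,T})$, which when multiplied by $\|\bB\|=O(p^{1/2})$ is once again $O_p(p^{-1/2}\omega_{p,T})$. Adding the two pieces delivers (b).

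The main technical hurdle lies in part (a): translating the row-wise factor-model bounds (e.g., $\|\hb_i-\bR^{-1}\bb_i\|$ of order $\omega_{p,T}$ on average) into a spectral-norm bound on the full $p\times K$ matrix $\hB-\bB$. The extra $p^{1/2}$ factor in the stated rate precisely reflects the accumulation over the $p$ rows via $\|\cdot\|\le\|\cdot\|_F$. Once (a) is established, part (b) is essentially algebraic and only requires the perturbation Lemmas~\ref{lem1} and~\ref{lem1b}.
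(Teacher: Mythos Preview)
Your proposal is correct and follows essentially the same approach as the paper: for (a), decompose via the rotation matrix controlled by Lemma~\ref{lem2}, pass to the Frobenius norm, and aggregate the row-wise Bai--Ng bounds over the $p$ rows; for (b), bound $\|\hB'\hB-\bB'\bB\|$ from (a) and apply the perturbation Lemma~\ref{lem1}. The only cosmetic differences are that the paper uses $\bH'$ (not $(\bH^{-1})'$) as the alignment rotation---either works since $\|\bH-\bI_K\|=O_p(\omega_{p,T}^2)$---and the paper splits $\hLam_b-\bLam_b$ as $[(\hB'\hB)^{-1}-(\bB'\bB)^{-1}]\hB'+(\bB'\bB)^{-1}(\hB-\bB)'$ rather than your variant, which yields the same rates.
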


\begin{proof}
(a) First of all,
\bean
\|\hB-\bB\| &=& \|\hB-\bB\bH' + \bB(\bH' -\bI_K)\|\\
& \leq& \|\hB-\bB\bH'\| + \|\bB(\bH' -\bI_K)\|, 
\eean
where $\bH$ is defined in \eqref{def_H}.
The second term is bounded by $\|\bB\|\cdot\|\bH' -\bI_K\| = O_p(p^{1/2}\omega_{p,T}^2) = o_p(p^{1/2}\omega_{p,T})$ by Lemma \ref{lem2} and using the fact $\|\bB\|^2=\lambda_{\max}(\bB'\bB)<c_2p$ from Assumption 3.1.  It remains to bound the first term.  Using the fact that $\|\cdot\|\le\|\cdot\|_F$, we have
\begin{equation} \label{fan1}
\|\hB-\bB\bH'\|^2 \leq\|\hB-\bB\bH'\|_F^2 =\sum_{i=1}^p\|\hb_i-\bH\bb_i\|^2
\end{equation}

Next, note that both $\hb_i=(1/T)\sum_{t=1}^Tx_{it}\hf_t$ and $(1/T)\sum_{t=1}^T\hf_t\hf_t'=\bI_K$ hold as a result of the constrained
least-squares problem \eqref{eq2.7} and \eqref{eq2.8}. Then, we have
\begin{eqnarray}
\hb_i-\bH\bb_i
&=&\frac{1}{T}\sum_{t=1}^T(\hf_tx_{it}-\bH\bff_tx_{it})+\frac{1}{T}\sum_{t=1}^T\bH\bff_t(\bff_t'\bb_i+u_{it}) - \bH\bb_i \nonumber \\
&=&\frac{1}{T}\sum_{t=1}^Tx_{it}(\hf_t-\bH\bff_t)+\frac{1}{T}\sum_{t=1}^T\bH\bff_tu_{it}
\label{fan2}
\end{eqnarray}
where we have used the fact that $\frac{1}{T}\sum_{t=1}^T\bff_t\bff_t'-\bI_K=0$. The remaining two terms on the right-hand side can be separately bounded as follows.

Under Assumptions \ref{a1} (1)--(2), \ref{a2} and \ref{a3}, we have the following rate of convergence of the estimated factors,
$$
    \frac{1}{T}\sum_{t=1}^T\|\hf_t-\bH\bff_t\|^2=O_p(\omega^2_{p,T}).
$$
This result is proved in Theorem 1 of \cite{bai-2002}.

For the first term in \eqref{fan2}, it is easy to see that $E(x_{it}^2)=O(1)$ and thus $T^{-1}\sum_{t=1}^Tx_{it}^2=O_p(1)$. Hence, we use the Cauchy--Schwarz inequality to yield
\ben
\|\frac{1}{T}\sum_{t=1}^Tx_{it}(\hf_t-\bH\bff_t)\|\leq(\frac{1}{T}\sum_{t=1}^Tx_{it}^2)^{1/2}(\frac{1}{T}\sum_{t=1}^T\|\hf_t-\bH\bff_t\|^2)^{1/2}=O_p(\omega_{p,T}).
\een

For the second term, by using the Cauchy--Schwarz inequality again, we have
$$\|\frac{1}{T}\sum_{t=1}^T\bH\bff_tu_{it}\|\leq \|\bH\|\cdot\|\frac{1}{T}\sum_{t=1}^T\bff_tu_{it}\|.$$

To bound $\|\hb_i-\bH\bb_i\|^2$, we use the simple fact that $(a+b)^2\leq 2(a^2+b^2)$. Therefore we combine two upper bonds to obtain that
\bean
\|\hB-\bB\bH'\|^2
&\le& 2\sum_{i=1}^p\left(\|\frac{1}{T}\sum_{t=1}^Tx_{it}(\hf_t-\bH\bff_t)\|^2+\|\frac{1}{T}\sum_{t=1}^T\bH\bff_tu_{it}\|^2\right)\\
&\leq&2\sum_{i=1}^p\left(O_p(\omega_{p,T}^2)+O_p(1)\cdot (T^{-1}\|\frac{1}{\sqrt{T}}\sum_{t=1}^T\bff_tu_{it}\|^2)\right).
\eean
To bound $\sum_{i=1}^p\|\frac{1}{\sqrt{T}}\sum_{t=1}^T\bff_tu_{it}\|^2$, as pointed out in the proof of Lemma 4 of \cite{bai-2002}, we only need to prove that $E(\frac{1}{p}\sum_{i=1}^p\|\frac{1}{\sqrt{T}}\sum_{t=1}^T\bff_tu_{it}\|^2)\leq M_1$ for some positive constant $M_1$. Next, we use the Cauchy--Schwarz inequality and the independence between $\{\bff_t\}_{t\geq 1}$ and $\{\bu_t\}_{t\geq 1}$ to obtain
\bean
E(\frac{1}{p}\sum_{i=1}^p\|\frac{1}{\sqrt{T}}\sum_{t=1}^T\bff_tu_{it}\|^2)
&=& \frac{1}{p}\sum_{i=1}^p\frac{1}{{T}}\sum_{s=1}^T\sum_{t=1}^TE(u_{is}u_{it})E(\bff_s'\bff_t)\\
&\le& \frac{1}{pT}\sum_{i=1}^p\sum_{s=1}^T\sum_{t=1}^T|E(u_{is}u_{it})|\cdot \sqrt{E\|\bff_s\|^2E\|\bff_t\|^2}\\
&\le& M \cdot C \ (\equiv M_1)
\eean
where $E\|\bff_s\|^2 \le C$ holds due to Assumption \ref{a2}, and $\frac{1}{pT}\sum_{i=1}^p\sum_{s=1}^T\sum_{t=1}^T|E(u_{is}u_{it})|\le M$ holds due to Assumption \ref{a3}(2). Then, we can follow \cite{bai-2002} to show that $$\sum_{i=1}^p\|\frac{1}{\sqrt{T}}\sum_{t=1}^T\bff_tu_{it}\|^2=O_p(p).$$
Now, it immediately follows from \eqref{fan1} that
$$\|\hB-\bB\bH'\|^2=O_p(p\omega_{p,T}^2)+O_p(p/T)=O_p(p\omega_{p,T}^2).$$

\medskip
(b) Begin by noting that $$\|\hB\|\le \|\hB-\bB\|+\|\bB\|=O_p(p^{\frac{1}2}\omega_{p,T})+O_p(p^{1/2})=O_p(p^{1/2}).$$
We use the triangle inequality to bound
\bean
\|\hB'\hB-\bB'\bB\|
&\le& \|\hB'\hB-\bB'\hB\|+\|\bB'\hB-\bB'\bB\| \\
&\leq& \|\hB-\bB\|\cdot \|\hB\|+\|\bB'\|\cdot\|\hB-\bB\|
\eean
Note that $\|\hB'\hB-\bB'\bB\|=O_p(p\omega_{p,T})=o_p(p)$ and $\lambda_{\min}(\bB'\bB)>c_1p$ hold by Assumption 3.1. Now, we use Lemma \ref{lem1} to obtain
\ben
\|(\hB'\hB)^{-1}-(\bB'\bB)^{-1}\|= O_p(p^{-2})\cdot \|\hB'\hB-\bB'\bB\|=O_p(p^{-1}\omega_{p,T}).
\een

It follows that
\bean
\|\hLam_b-\bLam_b\|&=&\|(\hB'\hB)^{-1}\hB'-(\bB'\bB)^{-1}\bB'\| \\
&=& \|(\hB'\hB)^{-1}\hB'-(\bB'\bB)^{-1}\hB'\|+\|(\bB'\bB)^{-1}\hB'-(\bB'\bB)^{-1}\bB'\| \\
&\leq& \|(\hB'\hB)^{-1}-(\bB'\bB)^{-1}\|\cdot\|\hB\|+\|(\bB'\bB)^{-1}\|\cdot\|\hB'-\bB'\| \\
&=& O_p(p^{-1}\omega_{p,T})\cdot O_p(p^{1/2})+ O_p(p^{-1})\cdot O_p(p^{1/2}\omega_{p,T})\\
&=&O_p(p^{-1/2}\omega_{p,T}).
\eean
which completes the proof of Lemma \ref{lem3}.

\end{proof}

The following lemma lays the foundation of inverse regression, which can be found in \cite{li-1991}.
\begin{lem} \label{lem4}
Under model (\ref{eq2.1}) and Assumption \ref{a1} (3), the centered inverse regression curve $E(\bff_t|y_{t+1})-E(\bff_t)$ is contained in the linear subspace spanned by $\bphi_k'\cov(\bff_t), k=1,\ldots,L$.

\end{lem}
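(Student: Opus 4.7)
The plan is to establish this as the standard SIR identity (Theorem 3.1 of \cite{li-1991}) inside the paper's notation. Without loss of generality assume $E(\bff_t)=0$, as the paper already centers variables. I will show that every vector in the orthogonal complement of $\mathrm{span}\{\cov(\bff_t)\bphi_1,\ldots,\cov(\bff_t)\bphi_L\}$ annihilates $E(\bff_t\mid y_{t+1})$, which gives the stated containment.

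Fix any $\bb\in\R^K$ with $\bb'\cov(\bff_t)\bphi_k=0$ for every $k=1,\ldots,L$. By Assumption \ref{a1}(3), there exist constants $c_1,\ldots,c_L$ with
$$E(\bb'\bff_t\mid \bphi_1'\bff_t,\ldots,\bphi_L'\bff_t)=\sum_{k=1}^L c_k\,\bphi_k'\bff_t.$$
Multiplying by $\bphi_j'\bff_t$, taking expectations, and using the tower property gives the linear system $\bb'\cov(\bff_t)\bphi_j=\sum_{k=1}^L c_k\,\bphi_j'\cov(\bff_t)\bphi_k$ for $j=1,\ldots,L$. The left-hand side vanishes by the choice of $\bb$, and the coefficient matrix $M_{jk}=\bphi_j'\cov(\bff_t)\bphi_k$ is nonsingular because the $\bphi_k$'s are orthonormal and $\cov(\bff_t)=\bI_K$ by Assumption \ref{a1}(2). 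Hence every $c_k=0$, so
$$E(\bb'\bff_t\mid \bphi_1'\bff_t,\ldots,\bphi_L'\bff_t)=0\quad \text{almost surely}.$$

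The next step lifts this identity from the $\sigma$-algebra generated by the projections to the one generated by $y_{t+1}$. By Assumption \ref{a2}, $\epsilon_{t+1}$ is independent of $\bff_t$, so conditionally on $(\bphi_1'\bff_t,\ldots,\bphi_L'\bff_t)$ the random variable $y_{t+1}=h(\bphi_1'\bff_t,\ldots,\bphi_L'\bff_t,\epsilon_{t+1})$ is a function of the projections and an independent noise, and is therefore conditionally independent of $\bb'\bff_t$. Enlarging the conditioning and applying the tower property yields
$$E(\bb'\bff_t\mid y_{t+1})=E\bigl[E(\bb'\bff_t\mid \bphi_1'\bff_t,\ldots,\bphi_L'\bff_t,y_{t+1})\,\big|\,y_{t+1}\bigr]=E\bigl[E(\bb'\bff_t\mid \bphi_1'\bff_t,\ldots,\bphi_L'\bff_t)\,\big|\,y_{t+1}\bigr]=0.$$
Since this holds for every $\bb$ in the orthogonal complement of $\mathrm{span}\{\cov(\bff_t)\bphi_k\}_{k=1}^L$, the vector $E(\bff_t\mid y_{t+1})=E(\bff_t\mid y_{t+1})-E(\bff_t)$ lies in that span, which is the desired conclusion.

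The only delicate point is the conditional-independence step that justifies removing $y_{t+1}$ from the inner conditioning; everything else reduces to the linearity condition plus a $K\times L$ linear-algebra computation. In particular, I do not expect to need any quantitative factor-estimation rate here — the lemma is purely a population-level identity and follows entirely from the modeling assumptions already in place.
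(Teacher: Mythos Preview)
Your proof is correct and is exactly the classical argument of \cite{li-1991}; the paper does not give its own proof of this lemma but simply cites that reference, so there is nothing further to compare. One cosmetic point: you invoke Assumption~\ref{a1}(2) for $\cov(\bff_t)=\bI_K$, but that assumption states the sample normalization $T^{-1}\bF'\bF=\bI_K$; the population identity you want is the canonical normalization~(\ref{eq2.4}), and in any case the nonsingularity of $M=(\bphi_j'\cov(\bff_t)\bphi_k)_{j,k}$ already follows from the $\bphi_k$ being linearly independent and $\cov(\bff_t)$ positive definite, so your conclusion is unaffected.
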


We are now ready to complete the proof of Theorem \ref{thm1}.

\begin{proof}[Proof of Theorem \ref{thm1}]

In the first part, we derive the probability bound for $\|\hbSig_{f|y}-\bSigma_{f|y}\|$. Let $\hm_h=\frac{1}{c}\sum_{l=1}^c\bx_{(h,l)}$ denote the average of the predictors within the corresponding slice $I_h$, and $\bm_h=E(\bx_t|y_{t+1}\in I_h)$ represents its population version.
By definition, $\bSigma_{f|y}=H^{-1}\sum_{h=1}^H(\bLam_b\bm_h)(\bLam_b\bm_h)'$. For fixed $H$, note that
$$\hbSig_{f|y}-\bSigma_{f|y}=H^{-1}\sum_{h=1}^H[(\hLam_b\hm_h)(\hLam_b\hm_h)'-(\bLam_b\bm_h)(\bLam_b\bm_h)'].$$
Thus,
\bea
\|\hbSig_{f|y}- \bSigma_{f|y} \|
&\le& H^{-1}\sum_{h=1}^H \left(\|\hLam_b\hm_h- \bLam_b\bm_h\|\cdot\|\hLam_b\hm_h\|+\|\bLam_b\bm_h\|\cdot\|\hLam_b\hm_h- \bLam_b\bm_h\|\right) \label{fan3}
\eea
We now bound each of the above terms.

From the definition, we immediately have
\bean
\|\hm_h-\bm_h\|&=&\|\frac{1}{c}\sum_{l=1}^c(\bB\bff_{(h,l)}+\bu_{(h,l)})-\bB E(\bff_t|y_{t+1}\in I_h)\|  \\
&\leq& \|\bB\|\cdot\|\frac{1}{c}\sum_{l=1}^c\bff_{(h,l)}-E(\bff_t|y_{t+1}\in I_h)\|+\|\frac{1}{c}\sum_{l=1}^c\bu_{(h,l)}\|.
\eean
Under Assumption 3.2, the sample mean $\frac{1}{c}\sum_{l=1}^c\bff_{(h,l)}$ converges to the population mean $E(\bff_t|y_{t+1}\in I_h)$ at the rate of $O_p(T^{-1/2})$.  This holds true as the random variable $\bff_t|y_{t+1}\in I_h$ is still stationary with finite second moments, and the sum of the $\alpha$-mixing coefficients converges (Theorem 19.2 in \cite{billingsley-1999}). This applies to $\bu_t|y_{t+1}\in I_h$ as well.
Therefore,
\bean
\|\hm_h-\bm_h\| &=& O_p(p^{\frac{1}2})\cdot O_p(T^{-\frac{1}2})+O_p(T^{-\frac{1}2})
= O_p(\sqrt{p/T}).
\eean

In addition to the inequality above, we also have
$$\|\bm_h\|=\|E(\bx_t|y_{t+1}\in I_h)\|\leq \|\bB\|\cdot\|E(\bff_t|y_{t+1}\in I_h)\|=O_p(p^{1/2}).$$
Thus, it follows that $$\|\hm_h\|\le \|\hm_h-\bm_h\|+\|\bm_h\| =O_p(p^{1/2}).$$
Now, we use both the triangle inequality and the Cauchy--Schwarz inequality to obtain that
\bean
\|\hLam_b\hm_h- \bLam_b\bm_h\|
&\le& \|\hLam_b\hm_h-\bLam_b\hm_h\|+\|\bLam_b\hm_h-\bLam_b\bm_h\|\\
&\leq&\|\hLam_b-\bLam_b\|\cdot\|\hm_h\|+\|\bLam_b\|\cdot\|\hm_h-\bm_h\|\\
&=&O_p(p^{-1/2}\omega_{p,T})\cdot O_p(p^{1/2})+O_p(p^{-1/2})\cdot O_p(\sqrt{p/T})\\
&=&O_p(\omega_{p,T}),
\eean
where we use $\|\hLam_b-\bH\bLam_b\|=O_p(p^{-1/2}\omega_{p,T})$ by Lemma \ref{lem3} in the second inequality, and that $\|\bLam_b\|\le  \|(\bB'\bB)^{-1}\|\cdot\|\bB\|=O_p(p^{-1/2})$ in the third equality.

Also we have that $$\|\hLam_b\hm_h\|\le(\|\hLam_b-\bLam_b\|+\|\bLam_b\|)\cdot\|\hm_h\|=O_p(p^{-1/2})\cdot O_p(p^{1/2})=O_p(1)$$ and $$\|\bLam_b\bm_h\|\le \|\bLam_b\|\cdot\|\bm_h\|=O_p(1).$$
Hence, from \eqref{fan3}, we conclude the desired result as follows:
\bean
\|\hbSig_{f|y}- \bSigma_{f|y} \|
&=& H^{-1}\sum_{h=1}^H \left(O_p(\omega_{p,T})\cdot O_p(1)+O_p(1)\cdot O_p(\omega_{p,T})\right)
= O_p(\omega_{p,T}).
\eean

In the second part, we show that $\bpsi_1,\ldots,\bpsi_L$ are the desired SDR directions and further derive the probability bound for $\|\hpsi_j-\bpsi_j\|$ with $j=1,\ldots, L$. Let $\{\lambda_i\}_{i=1}^K$ be the eigenvalues of $\bSigma_{f|y}$ in the descending order and $\{\hlam_i\}_{i=1}^K$ be the eigenvalues of $\hbSig_{f|y}$ in the descending order. Let $\hpsi_j$ be the eigenvector associated with the eigenvalue $\hlam_j$, and $\bpsi_j$ is the eigenvector associated with the eigenvalue $\lambda_j$.

In view of Lemma \ref{lem4}, $E(\bff_t|y_{t+1})$ is contained in the central subspace $S_{y|\bff}$ spanned by $\bphi_1, \ldots, \bphi_L$, since we have the normalization $\cov(\bff_t)=\bI_K$ and $E(\bff_t)=\bzero$. Recall the fact that the eigenvectors of $\bSigma_{f|y}$ corresponding to its $L$ largest eigenvalues would form another orthogonal basis for $S_{y|\bff}$. Hence, the eigenvectors $\bpsi_1, \ldots, \bpsi_L$ of $\bSigma_{f|y}$ constitute the SDR directions for model (\ref{eq2.1}).

Note that $|\hlam_{j-1}-\lambda_{j-1}|\le \|\hbSig_{f|y}-\bSigma_{f|y}\|$ holds by Weyl's Theorem in Lemma \ref{lem1b} (a). Similarly, we have $|\lambda_j-\hlam_{j+1}|\ge |\lambda_j-\lambda_{j+1}|-O_p(\omega_{p,T})$. Then, we have
\bean
|\hlam_{j-1}-\lambda_j|
&\ge& |\lambda_{j-1}-\lambda_j|-|\hlam_{j-1}-\lambda_{j-1}|\\
&\ge& |\lambda_{j-1}-\lambda_j|-O_p(\omega_{p,T}),
\eean
where the fact that $|\hlam_{j-1}-\lambda_{j-1}|\le \|\hbSig_{f|y}-\bSigma_{f|y}\|$ is used.

Since $\bSigma_{f|y}$ have distinct eigenvalues, both $|\hlam_{j-1}-\lambda_j|$ and  $|\lambda_j-\hlam_{j+1}|$ are bounded away from zero with probability tending to one for $j=1,\ldots, L$. Now, a direct application of $\sin(\theta)$ Theorem \citep{davis-kahan-1970} in Lemma \ref{lem1b} (b) shows that
\ben
\|\hpsi_j-\bpsi_j\|\le \frac{\sqrt{2}\cdot \|\hbSig_{f|y}-\bSigma_{f|y}\|}{\min(|\hlam_{j-1}-\lambda_j|,|\lambda_j-\hlam_{j+1}|)}=O_p(1)\cdot O_p(\omega_{p,T})=O_p(\omega_{p,T}),
\een
where $j=1,\ldots, L$,

Therefore, we complete the proof of Theorem \ref{thm1}.

\end{proof}

\subsection{Proof of Theorem \ref{thm2}}
\begin{proof}
First we write $\hphi=\frac{1}{T-1}\sum_{t=1}^{T-1} y_{t+1}\hf_t$ in terms of the true factors $\bff_t$, i.e.,
\bean
\hphi &=& \frac{\hLam_b}{T-1}\sum_{t=1}^{T-1} y_{t+1}\bx_t \\
        &=& \frac{\hLam_b}{T-1}\sum_{t=1}^{T-1}(\bB\bff_t + \bu_t)y_{t+1},
\eean
where we used the fact that $\hf_t = \hLam_b\bx_t$ as shown in Proposition \ref{prop2.1}.

Using the triangular inequality, we have
\bean
\| \hphi-\bar{\bphi} \|
&\le &  \|\hphi - (\hLam_b\bB)\bar{\bphi}\| + \|  (\hLam_b\bB - \bI_K)\bar{\bphi} \| \\
&=& \|\frac{(\hLam_b\bB)}{T-1}\sum_{t=1}^{T-1}\bff_ty_{t+1} + \frac{\hLam_b}{T-1}\sum_{t=1}^{T-1}\bu_ty_{t+1} - (\hLam_b\bB)\bar{\bphi}\| + \|  (\hLam_b\bB - \bI_K)\bar{\bphi} \| \\
&\leq & \|\frac{(\hLam_b\bB)}{T-1}\sum_{t=1}^{T-1}( y_{t+1}\bff_t - \bar{\bphi})\| + \|\frac{1}{T-1}\hLam_b\sum_{t=1}^{T-1}\bu_ty_{t+1} \| + \| (\hLam_b\bB - \bI_K)\bar{\bphi} \| .
\eean
In the sequel, we will bound three terms on the right hand side of this inequality respectively.

By Lemmas \ref{lem2} and \ref{lem3}, we have
\ben
\|\hLam_b\bB\| \le (\|\hLam_b- \bLam_b\|+\| \bLam_b\|) \cdot \|\bB\|= O_p(1)
\een
and
\ben
\|\hLam_b\bB - \bI_K\| = \|\hLam_b - \bLam_b\| \cdot \|\bB\| = O_p(\omega_{p,T}).
\een
Since $\|\bar{\bphi}\| = O_p(1)$, the third term on the right hand side of the inequality is $O_p(\omega_{p,T})$, i.e.,
\ben
\| (\hLam_b\bB - \bI_K)\bar{\bphi} \|=O_p(\omega_{p,T}).
\een

For the second term, note that $\bu_t$ is independent of $y_{t+1}$, hence $E(\bu_ty_{t+1})=0$. By law of large numbers and $\|\hLam_b \| = O_p(p^{-1/2})$, we have
\ben
\|\frac{1}{T-1}\hLam_b\sum_{t=1}^{T-1}\bu_ty_{t+1} \|=O_p(p^{-1/2})\cdot O_p(T^{-1/2})=O_p(\omega_{p,T}).
\een

It remains to bound the first term $\|\frac{1}{T-1}\sum_{t=1}^{T-1}( y_{t+1}\bff_t - \bar{\bphi})\|$. To this end, we express $\bff_t$ along the basis $\bphi_1,\cdots,\bphi_L$ and their orthogonal hyperplane, namely,
\ben
\bff_t =\sum_{j=1}^L \langle\bff_t,\bphi_j \rangle \bphi_j + \bff_t^{\perp}.
\een
By the orthogonal decomposition of normal distribution, $\langle\bff_t,\bphi_j \rangle$ and $\bff_t^{\perp}$ are independent. In addition, $y_{t+1}$ depends on $\bff_t$ only through  $\bphi_1'\bff_t,\cdots, \bphi_L'\bff_t$, and then, it is conditionally independent of $\bff_t^{\perp}$. It follows from contraction property that $y_{t+1}$ and $\bff_t^{\perp}$ are independent, unconditionally. Thus,
\ben
E(y_{t+1}\bff_t^{\perp}) =E(y_{t+1})E(\bff_t^{\perp}) = \bzero.
\een
Recall that $\bar{\bphi}=\sum_{j=1}^L E((\bphi_j'\bff_t)y_{t+1}) \bphi_j$. Now, we use the triangle inequality to obtain
\bean
&&\|\frac{1}{T-1}\sum_{t=1}^{T-1}( y_{t+1}\bff_t - \bar{\bphi})\| \\
&=& \|\frac{1}{T-1}\sum_{t=1}^{T-1}\bigl[\sum_{j=1}^L (\bphi_j'\bff_t)y_{t+1} \bphi_j + y_{t+1}\bff_t^{\perp} - \sum_{j=1}^L E((\bphi_j'\bff_t)y_{t+1}) \bphi_j\bigr]\|\\
&\le& \sum_{j=1}^L\| \frac{1}{T-1}\sum_{t=1}^{T-1} \bigl[(\bphi_j'\bff_t)y_{t+1} - E((\bphi_j'\bff_t)y_{t+1}) \bigr] \bphi_j\|+ \|\frac{1}{T-1}\sum_{t=1}^{T-1} y_{t+1}\bff_t^{\perp} \|   \\
&\leq&  \sum_{j=1}^L |\frac{1}{T-1}\sum_{t=1}^{T-1} \bigl[(\bphi_j'\bff_t)y_{t+1} - E((\bphi_j'\bff_t)y_{t+1}) \bigr]| + \| \frac{1}{T-1}\sum_{t=1}^{T-1} y_{t+1}\bff_t^{\perp} \|,
\eean
where we used the fact that $\|\bphi_j\|=1$ for any $j$. Note that each term here is $O_p(T^{-1/2})$ by Law of Large Numbers. Then, we have,
\ben
\|\frac{(\hLam_b\bB)}{T-1}(\sum_{t=1}^{T-1} y_{t+1}\bff_t - \bar{\bphi})\|
\le
\|\hLam_b\bB\|\cdot O_p(T^{-\frac{1}2})
=
O_p(\omega_{p,T}).
\een
This concludes the desired bound that $\| \hphi-\bar{\bphi} \| = O_p(\omega_{p,T})$.

Therefore, we complete the proof of Theorem \ref{thm2}.
\end{proof}

\end{document}